\newtheorem{theo}{Theorem}
\newtheorem{lemma}{Lemma}
\newtheorem{ann}{Assumption}
\numberwithin{equation}{section}
\numberwithin{theo}{subsection}
\newcommand{\be} {\begin{equation}}
\newcommand{\ee} {\end{equation}}
\newcommand{\bea} {\begin{eqnarray}}
\newcommand{\eea} {\end{eqnarray}}
\newcommand{\Bea} {\begin{eqnarray*}}
\newcommand{\Eea} {\end{eqnarray*}}
\begin{document}
\title{Limit theorems for strongly and intermediately supercritical branching processes in random environment with linear fractional offspring distributions}
\author{Christian B\"oinghoff}
\maketitle \vspace{1.5cm}

\begin{abstract} In the present paper, we characterize the behavior of 
supercritical branching processes in random environment with linear fractional offspring distributions, 
conditioned on having small, but positive values at some large generation. As it has been noticed in previous works, there 
is a phase transition in the behavior of the process. Here, we examine the strongly and intermediately supercritical regimes 
The main result is a conditional limit theorem for the rescaled associated random walk in the intermediately case.  \end{abstract}

{\em AMS 2000 Subject Classification.} 60J80, 60K37, 92D25

{\em Key words and phrases.} branching process in random environment, supercritical, conditional limit theorem
\maketitle
\section{Introduction}
Branching processes in random environment (BPRE) are a stochastic model for the development of a population in discrete time. The model has first been introduced in 
\cite{at,sm}. In generalization to Galton-Watson processes, the reproductive success of all individuals of a generation is 
influenced by an environment which varies in an independent fashion from generation to generation. \\
As first noted in \cite{af_80,de}, there is a phase transition in the behavior of subcritical BPRE (see e.g. for an overview \cite{bgk} and for detailed results 
 \cite{agkv05,agkv2,gkv,va_04,dy04,dy05,dy07, BK13}). Only recently, there has been interest in a phase transition
in supercritical processes, conditioned on surviving and having small values at some large generation (see \cite{BB12,BB13, nak13}). 
For the scaling limit of supercritical branching diffusions, a phase transition has been noted in \cite{Hutz12}. \medskip\\
In \cite{BB12,Hutz12}, the terminology of strongly, intermediately and weakly supercritical BPRE has been introduced in analogy to subcritical BPRE. In the present paper, 
we focus on the phase transition from strongly to intermediately supercriticality and characterize these regimes with limit results. \\
Let us formally introduce a branching process in random environment $(Z_n)_{n\in\mathbb{N}_0}$. For this, let $Q$ be a random variable taking values in $\Delta$, 
the space of all probability distributions on $\mathbb{N}_0=\{0,1,2,3,\ldots\}$. An infinite sequence $\Pi=(Q_1,Q_2,\ldots)$ of i.i.d. 
copies of $Q$ is called a \textsl{random environment}. By $Q_{n}$, we denote the (random) offspring distribution of an individual at generation $n-1$. \\
Let $Z_n$ be the number of individuals in generation $n$. Then $Z_{n}$ is the sum of $Z_{n-1}$ independent random variables with distribution $Q_{n}$. 
A sequence of $\mathbb{N}_0$-valued random variables  $Z_{0},Z_{1},\ldots$ is then called a 
  \emph{branching process in the random environment} $\Pi$, if $Z_{0}$ is independent of $ \Pi$ and
 given $ \Pi$ the  process $Z=(Z_{0},Z_{1},\ldots)$ is a Markov chain with 
 \begin{equation}  \label{transition} 
     \mathcal{L} \big(Z_{n} \; \big| \; Z_{n-1}=z, \, \Pi 
    = 
      (q_{1},q_{2},\ldots) \big) \ = \ q_{n}^{*z} 
 \end{equation} 
 for every $n\in \mathbb{N}=\{1,2,\ldots\}$, $z \in \mathbb{N}_0$ and $q_{1},q_{2},\ldots \in \Delta$, where $q^{*z}$  is the $z$-fold convolution of the measure $q$. \\
By $\mathbb{P}$, we will denote the corresponding probability measure on the underlying probability space. We shorten  $Q(\{y\}), q(\{y\})$ to $Q(y),q(y)$ and write 
\[\mathbb{P}(\cdot \mid Z_0=z) =: \mathbb{P}_z(\cdot) \ .\]
For convenience, we write $\mathbb{P}(\cdot)$ instead of $\mathbb{P}_1(\cdot)$. 
Throughout the paper, we assume that the offspring distributions have the following form,
\[q\big(0\big)= a \ , \ q\big(k\big) = \frac{(1-a)(1-p)}{p}\ p^k \ , \ \text{for}\ k\geq 1\]
where $a\in [0,1)$ and $p\in(0,1)$ are two random parameters. Note that $a=1$ would imply that 
an individual becomes extinct with probability one. Thus we exclude this case.
This class of offspring distributions is often called \emph{linear fractional} as the generating functions have an 
explicit formula as a quotient of two linear functions. Also the concatenation of two linear fractional 
functions is again linear fractional and thus the generating function of $Z_n$ can be calculated explicitly in this case. 
An important tool in the analysis of BPRE is the associated random walk $S=(S_n)_{n\ge 0}$. This random walk has initial state 
$S_{0}=0$ and  increments   $X_n=S_n-S_{n-1}, \, n\ge 1$ defined by
\[ 
    X_{n} \ := \ \log m(Q_n),  
\] 
where 
\[ 
m(q)\ := \
\sum_{y=0}^{\infty} y \ q(y) 
\] 
is the mean of the offspring distribution $q \in \Delta$. The expectation of $Z_n$ can be expressed by $S_n$ by
\begin{equation*} 
  \mathbb{E}[Z_{n} \,| \,  \Pi \,]   \ = \ 
\prod_{k=1}^n m(Q_k)\ = \ \exp(S_n) \quad 
\mathbb{P}\text{--a.s.} 
\end{equation*} 
Averaging over the environment gives 
\begin{align}   
  \mathbb{E}[Z_{n} ]   \ = \ 
 \mathbb{E}[ m(Q) ]^n.  \label{expect2} 
\end{align} 
A well-known estimate following from this by Markov inequality is
\begin{eqnarray} 
   \lefteqn{ \mathbb{P} (Z_{n} > 0 \,| \,  \Pi ) \ =  \ 
\min_{0\le k \leq n} 
       \mathbb{P} (Z_{k} > 0 \, | \,  \Pi )}     \nonumber\\ 
    &\leq & 
  \min_{0\le k \leq n}   \mathbb{E}[ Z_{k} \,| \,  \Pi ]  \ = \ 
\exp\big(\min_{0\le k \leq n} S_{k}\big)  \quad 
\mathbb{P}\text{--a.s.}       \label{gleichung1} 
\end{eqnarray} 
Here, we focus on supercritical BPRE, i.e. the case of $\mathbb{E}[X]>0$. 

As it has been described in \cite{BB12} on the level of large deviations and for the most recent common ancestor, 
there is a phase transition in the supercritical regime. Our aim here is to describe the regimes of \emph{strongly} and \emph{intermediately} supercriticality 
more in detail. In these regimes, the event $\{Z_n=1\}$ is typically realized in a favorable environment, i.e. conditioned on $\{Z_n=1\}$, $S_n$ will still be large. 

Throughout the paper, we write $\{Z_\infty>0\}$ for the event $\{ Z_n>0 \ \forall n\in\mathbb{N}\}$. The paper is organized as follows: In Section 2, 
we state our main results. Section 3 deals with special properties of linear fractional offspring distributions. Section 4 recalls some properties of 
conditioned random walks whereas our results are proved in Sections 5 and 6.

\section{Results}
\subsection{The strongly supercritical case}
In this part, we assume that
\begin{align}\label{strong1}
0<\mathbb{E}\big[Xe^{-X}\big]<\infty\ , 
\end{align}
which we refer to as \emph{strongly supercritical}. Note that this condition implies $\mathbb{E}[e^{-X}]<\infty$. First, we introduce a change of measure. Let
 $\phi: \Delta\rightarrow \mathbb{R}$ be a bounded 
and measurable function. Then the measure $\mathbf{P}$ is defined by
\begin{align}\label{change1}
\mathbf{E}\big[\phi(Q)\big]:= \gamma^{-1} \mathbb{E}\big[e^{-X}\phi(Q)\big] \ ,
\end{align}
where $\gamma:=\mathbb{E}\big[e^{-X}\big]$. Under $\mathbf{P}$, $S$ is still a random walk with positive drift as (\ref{strong1}) implies
\[\mathbf{E}\big[X\big]=\gamma^{-1}\mathbb{E}\big[Xe^{-X}\big]>0\ . \]
Throughout this section, we assume that $\mathbf{E}\big[\log(1-Q(0))\big]>-\infty$. As it is proved in \cite{sm}[Theorem 3.1], this condition assures that $\mathbf{E}[X]>0$ indeed implies that the process survives with a positive probability, i.e.
\[\lim_{n\rightarrow\infty} \mathbf{P}(Z_n>0)=\mathbf{P}(Z_\infty>0)>0 \ .\]
In the intermediately supercritical case, we require a slightly different condition.
Our first result describes the asymptotics of having exactly one individual at some large generation. 
\begin{theo}\label{thstrong1}
Under (\ref{strong1}), there is a constant $\vartheta>0$ such that 
\[\mathbb{P}(Z_n=1) \sim \vartheta \ \gamma^n \ .\]
\end{theo}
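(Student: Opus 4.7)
The plan is to exploit the linear fractional structure to obtain an explicit quenched formula for $\mathbb{P}(Z_n=1\mid\Pi)$, then apply the change of measure (\ref{change1}) to expose the factor $\gamma^n$, and finally pass to the limit by bounded convergence.

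First I would record a key identity from Section~3 (on linear fractional distributions). For a linear fractional $q$ with $f(s)=a+(1-a)(1-p)s/(1-ps)$ and mean $m=(1-a)/(1-p)$, a direct computation yields
\[
1-f(s)=\frac{(1-a)(1-s)}{1-ps},\qquad f'(s)=\frac{1}{m}\left(\frac{1-f(s)}{1-s}\right)^{2}.
\]
Writing $g_{k}=f_{k}\circ f_{k+1}\circ\cdots\circ f_{n}$ (with $g_{n+1}(s)=s$), one has $\mathbb{P}(Z_n=1\mid\Pi)=g_{1}'(0)=\prod_{k=1}^{n}f_{k}'(g_{k+1}(0))$. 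Substituting the identity above into each factor, all intermediate terms cancel telescopically and $1-g_{n+1}(0)=1$, so
\[
\mathbb{P}(Z_n=1\mid\Pi)\;=\;\Bigl(\prod_{k=1}^{n}m(Q_k)\Bigr)^{-1}\bigl(1-g_{1}(0)\bigr)^{2}\;=\;e^{-S_n}\,\mathbb{P}(Z_n>0\mid\Pi)^{2}.
\]

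Taking the annealed expectation and applying the definition of $\mathbf{P}$ in (\ref{change1}) iteratively over the $n$ environments (the functional $\mathbb{P}(Z_n>0\mid\Pi)^{2}$ depends on $Q_{1},\dots,Q_{n}$ only) gives
\[
\mathbb{P}(Z_n=1)\;=\;\mathbb{E}\bigl[e^{-S_n}\mathbb{P}(Z_n>0\mid\Pi)^{2}\bigr]\;=\;\gamma^{n}\,\mathbf{E}\bigl[\mathbb{P}(Z_n>0\mid\Pi)^{2}\bigr].
\]
It remains to show that the $\mathbf{E}$-term has a strictly positive limit $\vartheta$. Since $\mathbb{P}(Z_n>0\mid\Pi)$ is nonincreasing in $n$ and bounded by $1$, it converges pointwise to $\mathbb{P}(Z_\infty>0\mid\Pi)$, and bounded convergence yields
\[
\mathbf{E}\bigl[\mathbb{P}(Z_n>0\mid\Pi)^{2}\bigr]\;\longrightarrow\;\vartheta:=\mathbf{E}\bigl[\mathbb{P}(Z_\infty>0\mid\Pi)^{2}\bigr].
\]
Positivity of $\vartheta$ uses exactly the hypothesis invoked after (\ref{change1}): under $\mathbf{P}$ one has $\mathbf{E}[X]>0$ and $\mathbf{E}[\log(1-Q(0))]>-\infty$, so the Smith--Wilkinson criterion gives $\mathbf{P}(Z_\infty>0)>0$, hence $\mathbb{P}(Z_\infty>0\mid\Pi)$ is strictly positive on a set of positive $\mathbf{P}$-measure, and therefore its second moment is strictly positive.

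The only truly substantive step is the telescoping identity $\mathbb{P}(Z_n=1\mid\Pi)=e^{-S_n}\mathbb{P}(Z_n>0\mid\Pi)^{2}$: it is what turns the quenched computation into a random-walk exponential moment and makes the change of measure effective. Once this identity is established, the rest is an application of (\ref{change1}) plus bounded convergence, so I do not expect further obstacles; the strong supercriticality assumption (\ref{strong1}) is needed only insofar as it guarantees $\gamma<\infty$ and that $\mathbf{P}$ remains supercritical so Smith--Wilkinson applies.
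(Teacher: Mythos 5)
Your proposal is correct and follows essentially the same route as the paper: the identity $\mathbb{P}(Z_n=1\mid\Pi)=e^{-S_n}\mathbb{P}(Z_n>0\mid\Pi)^2$ (the paper's (\ref{surv1}), which it takes from Kozlov's explicit linear fractional formula rather than re-deriving by your telescoping argument), followed by the change of measure to expose $\gamma^n$, bounded/monotone convergence, and positivity of $\vartheta$ via the Smith--Wilkinson survival criterion under $\mathbf{P}$. The only difference is presentational: you prove (\ref{surv1}) from scratch, while the paper cites it.
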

The next theorem describes the distribution of $Z_n$, conditioned on $Z_n$ being bounded.
\begin{theo}\label{theostrong23}
Assume (\ref{strong1}). As $n\rightarrow\infty$ and for every $c\in\mathbb{N}$ and $1\leq k\leq c$,
\[\mathbb{P}(Z_n=k\mid 1\leq Z_n\leq c) \rightarrow \tfrac{1}{c} \ , \]
i.e. the limiting distribution is uniform on $\{1,\ldots,c\}$. 
\end{theo}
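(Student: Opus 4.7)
My plan is to reduce Theorem~\ref{theostrong23} to the $k$-independent asymptotic
$$\mathbb{P}(Z_n=k)\ \sim\ \vartheta\,\gamma^n\qquad\text{for every fixed }k\ge 1,$$
where $\vartheta$ is the constant of Theorem~\ref{thstrong1}. Once this is established, summing over $k=1,\dots,c$ gives $\mathbb{P}(1\le Z_n\le c)\sim c\vartheta\gamma^n$ and the theorem follows from
$$\mathbb{P}(Z_n=k\mid 1\le Z_n\le c)\;=\;\frac{\mathbb{P}(Z_n=k)}{\mathbb{P}(1\le Z_n\le c)}\ \longrightarrow\ \frac{1}{c}.$$

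To get the asymptotic, I would exploit the linear fractional structure. Iterating the composition rule for linear fractional generating functions (the content of Section~3) shows that, conditional on the environment and on survival up to generation $n$, the law of $Z_n$ is geometric on $\{1,2,\ldots\}$; more precisely, setting $b_j=p_j/(1-p_j)$ and
$$B_n\;=\;e^{S_n}\sum_{j=1}^{n}b_j\,e^{-S_j},\qquad \pi_n\;=\;\frac{B_n}{1+B_n},$$
one obtains $\mathbb{P}(Z_n\ge 1\mid\Pi)=e^{S_n}/(1+B_n)$ and, for every $k\ge 1$,
$$\mathbb{P}(Z_n=k\mid\Pi)\;=\;\frac{e^{S_n}}{(1+B_n)^2}\,\pi_n^{k-1}.$$
Taking expectation and applying the change of measure \eqref{change1}, whose Radon--Nikodym density on $\sigma(Q_1,\dots,Q_n)$ equals $\gamma^{-n}e^{-S_n}$, yields
$$\mathbb{P}(Z_n=k)\;=\;\gamma^n\,\mathbf{E}\!\left[\frac{e^{2S_n}}{(1+B_n)^2}\,\pi_n^{k-1}\right].$$

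Under $\mathbf{P}$ the walk $S_n$ has strictly positive drift, so $S_n\to+\infty$ almost surely; combined with the assumption $\mathbf{E}[\log(1-Q(0))]>-\infty$, the series $\sum_{j\ge 1}b_j e^{-S_j}$ converges $\mathbf{P}$-a.s.\ to a finite and strictly positive limit $\widehat{B}$. Consequently $e^{-S_n}(1+B_n)\to\widehat{B}$ and $\pi_n\to 1$, so the integrand above tends $\mathbf{P}$-a.s.\ to $\widehat{B}^{-2}$, a pointwise limit that is \emph{independent of $k$}. The main obstacle is justifying the interchange of limit and expectation, i.e.\ producing a dominating function uniform in $n$. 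This step is exactly the technical core of the proof of Theorem~\ref{thstrong1} (the case $k=1$), and because the extra factor $\pi_n^{k-1}$ lies in $[0,1]$ the same dominating function controls the integrand for every $k\ge 1$. Dominated convergence then gives $\gamma^{-n}\mathbb{P}(Z_n=k)\to\mathbf{E}[\widehat{B}^{-2}]=\vartheta$, independently of $k$, completing the reduction and hence the proof.
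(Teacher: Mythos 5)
Your proposal is correct and follows essentially the same route as the paper: the linear fractional identity $\mathbb{P}(Z_n=k\mid\Pi)=e^{-S_n}\,\mathbb{P}(Z_n>0\mid\Pi)^2H_n^{k-1}$ together with the change of measure gives $\gamma^{-n}\mathbb{P}(Z_n=k)=\mathbf{E}\big[\mathbb{P}(Z_n>0\mid\Pi)^2H_n^{k-1}\big]\to\mathbf{E}\big[\mathbb{P}(Z_\infty>0\mid\Pi)^2\big]$, a limit independent of $k$, and the ratio then yields $1/c$. The only remark is that your concern about producing a dominating function is unfounded: your integrand $e^{2S_n}(1+B_n)^{-2}\pi_n^{k-1}$ is precisely $\mathbb{P}(Z_n>0\mid\Pi)^2H_n^{k-1}\le 1$, so bounded convergence applies immediately, exactly as in the paper.
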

The fact that this limiting distribution is uniform seems rather linked to linear fractional offspring distributions whereas we suspect that Theorem \ref{thstrong1} also holds for more general offspring distributions. \\
Our next theorem essentially says that conditioned on $Z_{n}=1$, the process is of constant order at all times. 
\begin{theo}\label{thstrong2}
Under Assumption (\ref{strong1}), there is a probability distribution $r=(r_z)_{z\in\mathbb{N}}$ such that for all $t\in (0,1)$
\[\lim_{n\rightarrow\infty} \mathbb{P}(Z_{\lfloor nt\rfloor} =z\mid Z_n=1) = r_z \ . \]
Moreover, $r_z$ does not depend on $t$ and is given by
\[r_z=z\mathbf{E}\big[\mathbb{P}(Z_{\infty} >0\mid\Pi)^2\mathbb{P}(Z_{\infty} =0\mid\Pi)^{z-1}\big]  \ .\]
\end{theo}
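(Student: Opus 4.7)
The plan is to combine the Markov property, the branching property, and the explicit linear fractional formulas for $\P(Z_k=z\mid\Pi)$ with the change of measure (\ref{change1}). First, since the environment is i.i.d.\ and, given $\{Z_{\lfloor nt\rfloor}=z\}$, the remaining $m:=n-\lfloor nt\rfloor$ generations form a BPRE with $z$ starting individuals in an independent fresh environment,
\[
\P(Z_{\lfloor nt\rfloor}=z,\,Z_n=1)\;=\;\P(Z_{\lfloor nt\rfloor}=z)\,\P_z(Z_m=1).
\]
Theorem \ref{thstrong1} gives $\P(Z_n=1)\sim\vartheta\gamma^n$, and the same change-of-measure analysis yields $\P(Z_k=z)\sim\vartheta\gamma^k$ for every fixed $z\geq 1$ with the \emph{same} constant $\vartheta$ (consistent with the uniformity in Theorem \ref{theostrong23}). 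Hence
\[
\P(Z_{\lfloor nt\rfloor}=z\mid Z_n=1)\;\sim\;\gamma^{-m}\,\P_z(Z_m=1),
\]
and everything reduces to proving $\gamma^{-m}\P_z(Z_m=1)\to r_z$ as $m\to\infty$.

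For the second step, starting from $z$ individuals the branching property says that, conditionally on $\Pi$, the event $\{Z_m=1\}$ requires exactly one of the $z$ subfamilies to survive as a singleton while the other $z-1$ go extinct, so
\[
\P_z(Z_m=1)\;=\;z\,\E\!\left[\P(Z_m=1\mid\Pi)\,\P(Z_m=0\mid\Pi)^{z-1}\right].
\]
Iterating the elementary linear fractional identity discussed in Section 3 produces the standard representation
\[
\P(Z_m>0\mid\Pi)=\frac{1}{e^{-S_m}+\eta_m},\qquad \P(Z_m=1\mid\Pi)=\frac{e^{-S_m}}{(e^{-S_m}+\eta_m)^2},
\]
with $\eta_m:=\sum_{k=1}^{m}c_k e^{-S_{k-1}}$ and $c_k:=p_k/(1-a_k)$. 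Pulling out the factor $e^{-S_m}$ and applying (\ref{change1}) yields
\[
\P_z(Z_m=1)\;=\;z\gamma^m\,\mathbf{E}\!\left[\frac{1}{(e^{-S_m}+\eta_m)^2}\left(1-\frac{1}{e^{-S_m}+\eta_m}\right)^{z-1}\right].
\]

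The third step is the limit. Under $\mathbf{P}$ the walk $S$ drifts to $+\infty$, so $e^{-S_m}\to 0$ $\mathbf{P}$-a.s., and the series $\eta_m$ converges $\mathbf{P}$-a.s.\ (exponentially fast, by the strong law of large numbers) to $\eta_\infty:=\sum_{k\geq 1}c_k e^{-S_{k-1}}<\infty$. One then identifies $1/\eta_\infty=\P(Z_\infty>0\mid\Pi)$ and $1-1/\eta_\infty=\P(Z_\infty=0\mid\Pi)$ $\mathbf{P}$-a.s., and passes to the limit to obtain
\[
\gamma^{-m}\P_z(Z_m=1)\;\longrightarrow\;z\,\mathbf{E}\!\left[\P(Z_\infty>0\mid\Pi)^{2}\,\P(Z_\infty=0\mid\Pi)^{z-1}\right]=r_z,
\]
which is the announced formula and is manifestly independent of $t$. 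A quick geometric-series identity $\sum_{z\geq 1}z(1-q)^{z-1}q^2=1$ combined with $\mathbf{P}(\eta_\infty<\infty)=1$ gives $\sum_{z\geq 1}r_z=1$, so $r$ is indeed a probability distribution on $\N$.

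The principal obstacle is the exchange of limit and expectation: $1/(e^{-S_m}+\eta_m)^2$ is not bounded above, and the extra factor $(1-1/(e^{-S_m}+\eta_m))^{z-1}\in[0,1]$ is harmless for integrability but does not help either. I would re-use verbatim the uniform integrability estimate that already underlies the proof of Theorem \ref{thstrong1}, namely the convergence $\mathbf{E}[(e^{-S_m}+\eta_m)^{-2}]\to\mathbf{E}[\eta_\infty^{-2}]$ from which $\vartheta$ is obtained; the integrand here is dominated by the one in that theorem, so no new estimate is required.
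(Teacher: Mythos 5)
Your argument is correct and follows essentially the same route as the paper: decompose at time $\lfloor nt\rfloor$ via the branching property, use $\mathbb{P}(Z_k=z)\sim\vartheta\gamma^k$, write $\mathbb{P}_z(Z_m=1)=z\,\mathbb{E}[\mathbb{P}(Z_m=1\mid\Pi)\mathbb{P}(Z_m=0\mid\Pi)^{z-1}]$, change measure, and pass to the limit. The only remark worth making is that your ``principal obstacle'' is not one: the integrand equals $\mathbb{P}(Z_m>0\mid\Pi)^2\,\mathbb{P}(Z_m=0\mid\Pi)^{z-1}\le 1$ (since $e^{-S_m}+\eta_m$ is the reciprocal of a conditional survival probability, hence $\ge 1$), so dominated convergence applies directly, exactly as in the paper.
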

Let us briefly explain the intuition behind these results. Given $S_n\approx 0$ and $\min_{0\leq k\leq n} S_k\geq 0$, the process would have a large probability of 
surviving and being small at generation $n$ (see \cite{BB12}) and the event would be realized by \emph{environmental stochasticity}. However, in the 
strongly supercritical case, such an environment has very small probability. Conditioned on $\{Z_n=1\}$, $S_n$ has still a positive drift and thus $S_n$ will be large. Thus 
the event $\{Z_n=1\}$ is here typically realized by \textsl{demographic stochasticity}. Growing within a favorable environment and 
then becoming small again would require an exponential number of independent subtrees becoming extinct. This has very small probability. Thus the conditioned process 
typically stays small at all generations as it is stated in Theorem \ref{thstrong2}.\\ 
This theorem also explains a result for the most recent common ancestor in \cite{BB12}[Corollary 2.3]. If the process is small at all times, then the
 most recent common ancestor of the population will be close to the final generations. We suspect that qualitatively, these results will also be true for more general offspring distributions. 

\subsection{The intermediately supercritical case}
In this section, we assume that
\begin{align}\label{inter1}
\mathbb{E}\big[Xe^{-X}\big]=0\ . 
\end{align}
In \cite{BB12}, this regime has been characterized as \emph{intermediately supercritical}. Here, we will prove conditional limit theorems describing 
this regime more in detail.\\
(\ref{inter1}) suggests a change of measure. Recall the definition of $\mathbf{P}$ from (\ref{change1}). Due to 
Assumption (\ref{inter1}), $S$ is now a recurrent random walk under $\mathbf{P}$, i.e. 
\[\mathbf{E}[X]=0\ .\]
For our theorems, we require some regularity of the distribution of $X$. 
\begin{ann}\label{ass1}
We assume that with respect to $\mathbf{P}$, $X$ has finite variance, or more generally belongs to the domain of attraction of some strictly stable law $s$
 with index $\alpha\in (0,2]$. 
\end{ann}
As to the regularity of the offspring distribution, we require the following condition.
\begin{ann}\label{ass2}
We assume that there is an $\varepsilon>0$ such that
\[\mathbf{E}\big[|\log(1-Q(0))|^{\alpha+\varepsilon}\big]<\infty\ .\]
\end{ann}
%We use results from \cite{abkv11} and \cite{abkv12} which have so far only been proved for the non-lattice case. Thus we restrict ourselves to this case here.
Our first theorem describes the asymptotics of the probability of having exactly one individual in some large generation $n$. 
\begin{theo}\label{theo1}
Assume (\ref{inter1}). Then under Assumptions \ref{ass1} and \ref{ass2}, there is a positive and finite constant $\theta$ such that as $n\rightarrow\infty$,
\[\mathbb{P}(Z_n=1) \ \sim\ \theta \ \mathbb{E}\big[e^{-X}\big]^n \ \mathbf{P}\big(\min_{0\leq k\leq n} S_k\geq 0\big) \ . \]
\end{theo}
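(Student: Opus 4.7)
The strategy combines the explicit linear-fractional formula for $\mathbb{P}(Z_n=1\mid\Pi)$ with the change of measure (\ref{change1}) and the theory of centred random walks conditioned to stay non-negative. First, a routine induction on $n$, using that for a linear-fractional $f$ one has $(1-f(s))^{-1}=e^{-X}(1-s)^{-1}+\eta$ with $\eta:=p/(1-a)$, yields
\[ \frac{1}{1-f_1\circ\cdots\circ f_n(s)} \;=\; \frac{e^{-S_n}}{1-s} \;+\; \sum_{k=1}^{n}\eta_k\,e^{-S_{k-1}}. \]
Differentiating at $s=0$ then gives the crucial identity
\[ \mathbb{P}(Z_n=1\mid\Pi) \;=\; \frac{e^{-S_n}}{T_n^{\,2}},\qquad T_n\;:=\;e^{-S_n}+\sum_{k=1}^{n}\eta_k\,e^{-S_{k-1}}. \]
Taking expectations and using $d\mathbf{P}/d\mathbb{P}\big|_{\sigma(Q_1,\ldots,Q_n)}=e^{-S_n}/\gamma^n$ produces $\mathbb{P}(Z_n=1)=\gamma^n\,\mathbf{E}[T_n^{-2}]$, so the theorem reduces to showing $\mathbf{E}[T_n^{-2}]\sim \theta\,\mathbf{P}(\tau_n\ge 0)$ with $\tau_n:=\min_{0\le k\le n}S_k$, for the centred walk $S$ under $\mathbf{P}$.

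The next step is to split this expectation along $\{\tau_n\ge 0\}$ and its complement. On $\{\tau_n<0\}$, writing $k^\ast-1$ for the location of the minimum, the inequality $T_n\ge \eta_{k^\ast}e^{-\tau_n}$ gives $T_n^{-2}\le \eta_{k^\ast}^{-2}e^{2\tau_n}$; decomposing according to $k^\ast$, using the independence of $\eta_{k^\ast}$ from the trajectory, the moments of $\eta$ provided by Assumption~\ref{ass2} (via $\eta\le 1/(1-Q(0))$), and the regular variation of $\mathbf{P}(\tau_n\ge 0)$ coming from classical Sparre-Andersen--type fluctuation theory in the domain of attraction of $s$, one obtains
\[ \mathbf{E}\!\left[T_n^{-2};\,\tau_n<0\right] \;=\; o\!\left(\mathbf{P}(\tau_n\ge 0)\right). \]

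On $\{\tau_n\ge 0\}$, the invariance principle for random walks conditioned to stay non-negative (Caravenna--Chaumont, Doney) forces $S_{k-1}\to\infty$ for $k$ in the bulk of $\{1,\ldots,n\}$, so the tail $\sum_{k>K}\eta_k e^{-S_{k-1}}$ becomes negligible for large $K$, and jointly with the environment $T_n$ converges in law to
\[ T_\infty \;:=\; \sum_{k=1}^{\infty}\eta_k\,e^{-S_{k-1}} \]
under the Doob $h$-transform measure $\mathbf{P}^{+}$ associated with the walk $S$ conditioned to stay non-negative forever (Bertoin--Doney). A uniform-integrability argument for $T_n^{-2}$ under $\mathbf{P}(\,\cdot\mid\tau_n\ge 0)$, based on $T_n^{-2}\le \eta_1^{-2}$ together with Assumption~\ref{ass2}, then upgrades this to
\[ \mathbf{E}\!\left[T_n^{-2}\,\big|\,\tau_n\ge 0\right] \;\longrightarrow\; \theta \;:=\; \mathbf{E}^{+}\!\left[T_\infty^{-2}\right]\in(0,\infty), \]
completing the proof. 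I expect the main obstacle to be the $o$-estimate on $\{\tau_n<0\}$: ruling out contributions from atypical environments where some $\eta_k$ is close to zero demands a careful decomposition by the location and depth of the minimum, coupled with precise fluctuation estimates and the moment bound from Assumption~\ref{ass2}, in the spirit of the analogous proofs developed for the subcritical regime.
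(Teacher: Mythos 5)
Your setup coincides with the paper's: the identity $\mathbb{P}(Z_n=1\mid\Pi)=e^{-S_n}\mathbb{P}(Z_n>0\mid\Pi)^2=e^{-S_n}T_n^{-2}$ and the change of measure reduce the theorem to $\mathbf{E}[T_n^{-2}]\sim\theta\,\mathbf{P}(L_n\geq 0)$ for the centred walk under $\mathbf{P}$, exactly as in the paper. The gap is in how you then extract the asymptotics. Your central claim, $\mathbf{E}[T_n^{-2};\,L_n<0]=o(\mathbf{P}(L_n\geq 0))$, is false. Decompose at the first time $\tau_n$ at which $\min\{0,S_1,\ldots,S_n\}$ is attained: for each \emph{fixed} $k\geq 1$ one has $\mathbf{P}(\tau_n=k)=\mathbf{P}(\tau_k=k)\,\mathbf{P}(L_{n-k}\geq 0)\asymp\mathbf{P}(L_n\geq 0)$, and on $\{\tau_n=k\}$ the quantity $T_n^{-2}$ is typically of order $e^{2S_k}$ times an $O(1)$ functional of the post-minimum walk, so $\mathbf{E}[T_n^{-2};\tau_n=k]$ is itself of order $\mathbf{P}(L_n\geq 0)$ with a strictly positive limit of the ratio. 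Hence the event $\{L_n<0\}=\{\tau_n\geq 1\}$ contributes a positive fraction of $\mathbf{E}[T_n^{-2}]$, not an error term; only the tail $\{\tau_n>m\}$ is negligible as $m\to\infty$ (this is the content of the paper's Lemma \ref{lemhelp}, proved via \cite{agkv05}[Lemma 2.2]). Correspondingly your limit constant $\mathbf{E}^+[T_\infty^{-2}]$ is only the $k=0$ term of the true constant, which is the convergent series $\theta=\sum_{k\geq 0}\mathbf{E}\big[\mathbf{E}^+[(1-f_{0,k}(P^k_\infty))^2];\tau_k=k\big]$ obtained in the paper from \cite{agkv05}[Lemma 4.1] (see (\ref{theta1}) and (\ref{reptheta2})). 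The repair is not cosmetic: you must run the whole argument with the decomposition over $\{\tau_n=k\}$, $k=0,1,\ldots,m$, identify the limit on each such event via the $h$-transform applied to the \emph{shifted} environment $\theta_k\circ\Pi$, and sum.

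A secondary problem is your control of $T_n^{-2}$ via $T_n\geq\eta_{k^\ast}e^{-S_{k^\ast-1}}$, which produces a factor $\eta_{k^\ast}^{-2}$. Assumption \ref{ass2} bounds $|\log(1-Q(0))|$ and hence controls $\eta=p/(1-a)\leq 1/(1-a)$ only from \emph{above}; it gives no moment of $\eta^{-1}$, so $\mathbf{E}[\eta^{-2}]$ may be infinite and this bound is not integrable. The obstacle you flag at the end is therefore real under your approach, but it is entirely avoidable: use instead the elementary inequality $T_n^{-1}=\mathbb{P}(Z_n>0\mid\Pi)\leq\exp(\min_{0\leq k\leq n}S_k)$ from (\ref{gleichung1}), which holds with no hypothesis on $\eta$ and is exactly what the paper feeds into \cite{agkv05}[Lemma 2.2] (with the test function $e^{-x}$) to kill the tail $\{\tau_n>m\}$. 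Likewise, uniform integrability of $T_n^{-2}$ is trivial since $T_n\geq 1$; no bound by $\eta_1^{-2}$ is needed.
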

For a formula for $\theta$, see Section \ref{secint}. 
From \cite{agkv05}[Lemma 2.1], it results that there is a slowly varying sequence $l(n)$ such that
\[\mathbb{P}(Z_n=1) \ \sim\ \theta \ \gamma^n \ l(n) \ n^{-(1-\rho)} \ ,\]
where $\rho=s(\mathbb{R}^+)$ and $s$ is the limiting stable law from Assumption \ref{ass1}. \\
Our next theorem describes the environment, conditioned on $\{Z_n=1\}$. For this, define 
\[ S^n=\big(S^{n}_t\big)_{0\leq t\leq 1} := \tfrac{l(n)}{ n^{1/\alpha}} \big(S_{\lfloor nt\rfloor}\big)_{0\leq t\leq 1}\ .\] 
Then it holds that:
\begin{theo}\label{theo2}
Assume (\ref{inter1}). Under Assumptions  \ref{ass1} and \ref{ass2},
\begin{align*}
 \mathcal{L}\big(S^n\mid Z_n=1\big) \longrightarrow (L^+_t)_{t\in[0,1]} \ ,
\end{align*}
where $L^+$ is the meander of a strictly stable L\'evy process.
\end{theo}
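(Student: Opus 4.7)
The plan is to reduce the statement to an asymptotic analysis under $\mathbf{P}$ by combining the linear-fractional identity with the change of measure. Iterating the standard composition formula for linear-fractional generating functions (the content of Section~3) gives the explicit identity
\[\mathbb{P}(Z_n=1\mid\Pi) \ =\ \frac{e^{-S_n}}{(e^{-S_n}+B_n)^2},\qquad B_n \ :=\ \sum_{k=1}^n \frac{p_k}{1-a_k}\, e^{-S_{k-1}}.\]
Since $d\mathbb{P}/d\mathbf{P}\big|_{\sigma(Q_1,\ldots,Q_n)}=\gamma^n e^{S_n}$ by (\ref{change1}), for every bounded continuous functional $F$ on the Skorokhod space $D[0,1]$,
\[\mathbb{E}\big[F(S^n)\,\mathbf{1}_{\{Z_n=1\}}\big] \ =\ \gamma^n\,\mathbf{E}\big[F(S^n)\,(e^{-S_n}+B_n)^{-2}\big].\]
In view of Theorem~\ref{theo1}, Theorem~\ref{theo2} is equivalent to
\[\frac{\mathbf{E}\big[F(S^n)\,(e^{-S_n}+B_n)^{-2}\big]}{\mathbf{P}\big(\min_{0\le k\le n}S_k\ge 0\big)}\ \longrightarrow\ \theta\,\mathbf{E}\big[F(L^+)\big].\]

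Set $M_n:=\min_{0\le k\le n}S_k$. The argument splits along $\{M_n<0\}$ and $\{M_n\ge 0\}$. If $S$ goes below zero at epoch $j$, then $B_n\ge (p_j/(1-a_j))\,e^{-S_{j-1}}$ with $S_{j-1}\le 0$, so the weight $(e^{-S_n}+B_n)^{-2}$ is atypically small there; decomposing at the first strict descending ladder epoch and using Assumption~\ref{ass2} to bound $\log(1-Q(0))$ in moments gives
\[\mathbf{E}\big[(e^{-S_n}+B_n)^{-2};\,M_n<0\big]\ =\ o\!\left(\mathbf{P}(M_n\ge 0)\right).\]
On $\{M_n\ge 0\}$, the invariance principle of Caravenna--Chaumont for stable random walks conditioned to stay nonnegative gives $S^n\Rightarrow L^+$. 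In parallel, for each fixed $K$ the initial segment $(S_0,\ldots,S_K)$ under $\mathbf{P}(\,\cdot\,|\,M_n\ge 0)$ converges to Doney's $h$-transformed walk $S^\uparrow$ conditioned to stay strictly positive forever. Combined with the decay of $e^{-S^\uparrow_{k-1}}$, this yields $B_n\to B_\infty$ in distribution, where
\[B_\infty \ :=\ \sum_{k=1}^\infty \frac{p_k^\uparrow}{1-a_k^\uparrow}\,e^{-S^\uparrow_{k-1}},\]
while $e^{-S_n}\to 0$ since $S_n$ is of order $n^{1/\alpha}/l(n)$ under the conditioning.

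Because the meander limit lives on the macroscopic scale $[0,1]$ while $B_\infty$ depends only on microscopic times $k=O(1)$, the functionals $F(S^n)$ and $(e^{-S_n}+B_n)^{-2}$ asymptotically decouple. A truncation argument $B_n = B_n^{(K)} + R_n^{(K)}$ together with a uniform integrability bound on $(e^{-S_n}+B_n)^{-2}$ (relying once more on Assumption~\ref{ass2}) yields
\[\mathbf{E}\big[F(S^n)(e^{-S_n}+B_n)^{-2}\,\big|\,M_n\ge 0\big]\ \longrightarrow\ \mathbf{E}[F(L^+)]\cdot\mathbf{E}[B_\infty^{-2}].\]
Taking $F\equiv 1$ identifies the constant of Theorem~\ref{theo1} as $\theta=\mathbf{E}[B_\infty^{-2}]$, in accordance with the formula announced in Section~\ref{secint}; dividing out yields the meander law $\mathbf{E}[F(L^+)]$, which is exactly Theorem~\ref{theo2}.

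The main obstacle is this joint analysis: one must simultaneously track the meander at macroscopic scale and Doney's walk $S^\uparrow$ at microscopic scale in the same limit, and establish uniform integrability of the weight under the positivity conditioning. The delicate case is when $B_n$ is small (so $B_n^{-2}$ blows up), which requires the first few $e^{-S_{k-1}}$ to be atypically small together with atypical values of $p_k/(1-a_k)$; Assumption~\ref{ass2} is used precisely to control the $p_k/(1-a_k)$'s in moments strong enough to make the tail contribution to $B_n^{-2}$ integrable uniformly in $n$.
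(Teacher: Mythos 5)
Your reduction (change of measure, explicit linear-fractional weight $(e^{-S_n}+B_n)^{-2}$, decoupling of the microscopic prefactor from the macroscopic path) has the right shape, but the first step of your case analysis is false and it propagates into a wrong constant. You claim
\begin{align*}
\mathbf{E}\big[(e^{-S_n}+B_n)^{-2};\ \min_{0\le k\le n}S_k<0\big]=o\big(\mathbf{P}(L_n\ge 0)\big)\ .
\end{align*}
This cannot hold. Consider the event $\{\tau_n=1\}=\{X_1<0,\ L_{1,n}\ge 0\}$, which is contained in $\{L_n<0\}$ and has probability $\mathbf{P}(X<0)\,\mathbf{P}(L_{n-1}\ge 0)\sim \mathbf{P}(X<0)\,\mathbf{P}(L_n\ge 0)$; on it the weight equals $\mathbb{P}(Z_n>0\mid\Pi)^2$ and, after conditioning on $\{L_{1,n}\ge 0\}$ and applying Lemma \ref{lem25} to the shifted environment, its conditional expectation converges to a strictly positive limit. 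Hence the $\{L_n<0\}$ part contributes a positive multiple of $\mathbf{P}(L_n\ge 0)$, not $o(\cdot)$. What is actually negligible is only the event $\{\tau_n>m\}$ for $m$ large (Lemma \ref{lemhelp}, proved via $\mathbb{P}(Z_n>0\mid\Pi)\le e^{L_n}$ and \cite{agkv05}[Lemma 2.2]). Correspondingly, your identification $\theta=\mathbf{E}[B_\infty^{-2}]$ recovers only the $k=0$ term $\mathbf{E}^+\big[\mathbb{P}(Z_\infty>0\mid\Pi)^2\big]$ of the series (\ref{theta1}), whereas the true constant is $\sum_{k\ge 0}\mathbf{E}\big[\mathbf{E}^+\big[(1-f_{0,k}(P^k_\infty))^2\big];\tau_k=k\big]$ as in (\ref{reptheta2}); the terms $k\ge 1$ come precisely from the paths you discard. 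Since you normalize by the constant of Theorem \ref{theo1}, taking $F\equiv 1$ in your final display would give $\theta_0/\theta<1$ instead of $1$, so the argument is internally inconsistent unless the false estimate is accepted.

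The repair is the paper's decomposition at the time $\tau_n$ of the global minimum rather than on the sign of the minimum: for each fixed $k\le l$ one conditions on $\mathcal{F}_{k+m}$, replaces $\mathbb{P}(Z_n>0\mid\Pi)^2$ by $\mathbb{P}(Z_{\tau_n+m}>0\mid\Pi)^2$ (Lemma \ref{lem12}), and applies the meander invariance principle to the post-minimum walk on $\{L_{k,n}\ge 0\}$, the pre-minimum block producing the $k$-th term of $\theta$. Your decoupling heuristic and the role you assign to Assumption \ref{ass2} are consistent with that scheme, but as written the proof asserts a false estimate and computes the wrong normalizing constant. Two minor points: you write $M_n$ for the minimum while the paper reserves $M_n$ for the maximum and $L_n$ for the minimum, and the meander convergence used here is \cite{agkv05}[Lemma 2.3] rather than an external invariance principle.
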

Essentially $L^+$ is a strictly stable L\'evy process, conditioned on staying positive on $(0,1]$. For more details see e.g. \cite{agkv05,do,du}.
We can also specify the distribution of $Z_n$, given $1\leq Z_n\leq c$ as in the strongly supercritical case.
\begin{theo}\label{theo23}
Assume (\ref{inter1}) and Assumptions \ref{ass1} and \ref{ass2}. As $n\rightarrow\infty$ and for every $c\in\mathbb{N}$ and $1\leq k\leq c$,
\[\mathbb{P}(Z_n=k\mid 1\leq Z_n\leq c) \rightarrow \tfrac{1}{c} \ , \]
i.e. the limiting distribution is uniform on $\{1,\ldots,c\}$. 
\end{theo}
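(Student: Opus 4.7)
Since $\mathbb{P}(Z_n = k \mid 1 \leq Z_n \leq c) = \mathbb{P}(Z_n = k)/\sum_{j=1}^{c} \mathbb{P}(Z_n = j)$, the statement reduces to proving that $\mathbb{P}(Z_n = k)/\mathbb{P}(Z_n = 1) \to 1$ for every fixed $k \geq 1$; from this the uniform limit $1/c$ is immediate.

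First I would invoke the linear fractional machinery of Section 3. Writing $A_n = e^{-S_n}$, $C_n = A_n + \xi_n$, where $\xi_n = \sum_{j=1}^{n} a_j e^{-(S_n - S_{j-1})}$ is the environmental functional coming from iterating the linear fractional generating functions, one has the explicit expression
\[
\mathbb{P}(Z_n = k \mid \Pi) = \frac{A_n}{C_n^{2}}\left(\frac{\xi_n}{C_n}\right)^{k-1}, \qquad k \geq 1;
\]
in particular, given $\Pi$ and $\{Z_n \geq 1\}$, $Z_n$ is geometric on $\mathbb{N}$ with success probability $A_n/C_n$. The change of measure \eqref{change1} then yields
\[
\mathbb{P}(Z_n = k) \ = \ \gamma^n\, \mathbf{E}\!\left[C_n^{-2}\,(\xi_n/C_n)^{k-1}\right].
\]

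Second, I would build on the analysis behind Theorem \ref{theo1}, which, specialized to $k = 1$, furnishes the asymptotic $\gamma^n\mathbf{E}[C_n^{-2}] \sim \theta\gamma^n \mathbf{P}(\min_{0 \leq j \leq n} S_j \geq 0)$ and identifies the dominant contribution as coming from environments on which the walk stays non-negative. Under that conditioning, Theorem \ref{theo2} forces $S_n \to \infty$ in $\mathbf{P}$-probability, so $A_n = e^{-S_n} \to 0$; Assumption \ref{ass2} controls the terms of $\xi_n$ indexed by $j$ close to $n$ (for which $S_n - S_{j-1}$ is bounded by a short-horizon piece of the walk), ensuring that $\xi_n$ stays bounded away from $0$. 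Consequently $\xi_n/C_n \to 1$ in $\mathbf{P}$-probability on the dominant event, and since $(\xi_n/C_n)^{k-1} \in [0,1]$, a bounded-convergence argument with respect to the probability measure obtained by normalising $C_n^{-2}\,\mathbf{1}_{\{\min_j S_j \geq 0\}}$ upgrades pointwise convergence to $\mathbf{E}[C_n^{-2}(\xi_n/C_n)^{k-1}] \sim \mathbf{E}[C_n^{-2}]$. This delivers $\mathbb{P}(Z_n = k) \sim \mathbb{P}(Z_n = 1)$ and hence the theorem.

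The step I expect to be the main obstacle is making this bounded-convergence argument rigorous. One must verify that configurations outside $\{\min_{0 \leq j \leq n} S_j \geq 0\}$, or those where $S_n$ fails to grow or $\xi_n$ is atypically small, contribute only $o(\mathbf{P}(\min_{0\leq j\leq n} S_j \geq 0))$ to $\mathbf{E}[C_n^{-2}]$. This is precisely the decisive technical input already needed for Theorem \ref{theo1}, so I would expect its proof to transfer with only cosmetic changes, carrying the bounded factor $(\xi_n/C_n)^{k-1}$ through all the estimates.
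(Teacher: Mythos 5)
Your proposal is correct and follows essentially the same route as the paper: reduce to showing $\mathbb{P}(Z_n=k)\sim\mathbb{P}(Z_n=1)$, write $\mathbb{P}(Z_n=k)=\gamma^n\mathbf{E}\big[\mathbb{P}(Z_n>0\mid\Pi)^2H_n^{k-1}\big]$ via (\ref{geo2}), observe that $H_n\to1$ because $S_n\to\infty$ under the walk conditioned to stay non-negative, and control the complementary event by the same minimum-decomposition used for Theorem \ref{theo1} (the paper's Lemma \ref{lemhelp} together with Lemma \ref{le_35}, which is exactly your ``carry the bounded factor through'' step, implemented via Lemma \ref{lem25}). The only cosmetic difference is that the paper decomposes at the global minimum and therefore tracks the random population size $Z_i=z$ there (hence the induction on $z$ in Lemma \ref{le_35}), whereas you keep pure environment functionals; the substance is identical.
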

For our next theorem, we require the successive global minima. Define the time of the first minimum up to generation $n$ as
\[\tau_n:= \min\big\{0\leq k\leq n\mid S_k=\min\{0, S_1,\ldots, S_n\}\big\}\ .\]
By 
\begin{align}
 \tau_{k,n}:= \min\big\{k\leq j\leq n\mid S_j=\min\{S_k,\ldots, S_n\}\big\}\ ,
\end{align}
we denote the time of the first minimum between generations $k$ and $n$. Our next theorem proves that the conditioned BPRE is small 
in those minima.
\begin{theo}\label{theo3}
Assume (\ref{inter1}) and Assumptions \ref{ass1} and \ref{ass2}. Let $t\in (0,1)$. Then
\[\lim_{n\rightarrow\infty} \mathbb{P}(Z_{\tau_{\lfloor nt\rfloor,n}}=z\mid Z_n=1) = q(z) \ ,\]
where
\[q(z):=z\ \mathbf{E}\big[\mathbb{P}(Z_{\infty} >0\mid\Pi)^2\mathbb{P}(Z_{\infty} =0\mid\Pi)^{z-1}\big] \ \]
is a probability distribution on $\mathbb{N}$.
\end{theo}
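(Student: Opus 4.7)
The plan is to mimic the strategy behind Theorem \ref{theo2}, but evaluated at the random time $\tau = \tau_{\lfloor nt\rfloor, n}$ instead of a deterministic time. The heuristic is that in the intermediately supercritical regime the rescaled walk converges to a stable meander, its post-$\lfloor nt \rfloor$ minimum is attained at an interior random time of order $n$, and around that minimum the conditioned BPRE is small because the local environment behaves like a typical $\mathbf{P}$-environment.

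First I would decompose on the location of the minimum and use the branching Markov property. Writing $\tau = \tau_{\lfloor nt\rfloor, n}$ and splitting according to $\tau = j$,
\begin{align*}
\mathbb{P}(Z_\tau = z,\, Z_n = 1) \;=\; \sum_{j = \lfloor nt\rfloor}^n \mathbb{E}\Big[\ind_{\{\tau = j\}}\,\mathbb{P}(Z_j = z\mid \Pi)\cdot z\cdot g_{j,n}(\Pi)\cdot h_{j,n}(\Pi)^{z-1}\Big],
\end{align*}
where $g_{j,n}(\Pi)$ and $h_{j,n}(\Pi)$ are the $\Pi$-conditional probabilities that a one-ancestor sub-BPRE started at generation $j$ in the environment $(Q_{j+1},Q_{j+2},\ldots,Q_n)$ has exactly one, respectively zero, descendants at generation $n$. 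The factor $z$ counts which of the $z$ ancestors alive at time $\tau$ contributes the unique surviving descendant at time $n$, the other $z-1$ lineages being forced to die out.

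Second I would invoke the linear fractional identities from Section 3 to rewrite these ingredients in terms of $S$ and of the one-sided survival probabilities $\mathbb{P}(Z_\infty > 0 \mid \Pi)$. Schematically, on the event $\{\tau=j\}$ one has $g_{j,n}(\Pi) \approx e^{-(S_n - S_j)} \,\mathbb{P}(Z_\infty > 0 \mid \Pi^{(j)})^2$ and $h_{j,n}(\Pi) \to \mathbb{P}(Z_\infty = 0 \mid \Pi^{(j)})$ as $n - j \to \infty$, where $\Pi^{(j)}:=(Q_{j+1},Q_{j+2},\ldots)$; this is precisely the regime where linear fractional approximations are sharp because $\tau = j$ forces the post-$j$ walk to stay above $S_j$ and grow. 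Applying the change of measure (\ref{change1}) absorbs the factor $\gamma^{-n}$ and reduces the problem to a $\mathbf{P}$-expectation involving the recurrent walk $S$, the conditioning $\{\min_{0\le k\le n}S_k\ge 0\}$, and the functional $F_z(\Pi) := z\,\mathbb{P}(Z_\infty > 0 \mid \Pi)^2\,\mathbb{P}(Z_\infty = 0 \mid \Pi)^{z-1}$. Dividing by the asymptotics of $\mathbb{P}(Z_n=1)$ from Theorem \ref{theo1}, the conditional probability reduces to
\begin{align*}
\frac{\mathbf{E}\!\left[\sum_{j=\lfloor nt\rfloor}^n \ind_{\{\tau=j\}}\,F_z\big(\Pi^{(j)}\big)\,;\, \min_{0\le k\le n} S_k\ge 0\right]}{\mathbf{P}\big(\min_{0\le k\le n} S_k\ge 0\big)} + o(1).
\end{align*}
The meander convergence of Theorem \ref{theo2} and the fact that $\tau$ is typically of order $n$ inside $[\lfloor nt\rfloor,n]$ imply that $\Pi^{(\tau)}$ is asymptotically distributed as the stationary environment under $\mathbf{P}$, so the ratio converges to $\mathbf{E}[F_z(\Pi)] = q(z)$.

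The main obstacle is the uniformity required to exchange the sum over $j$ with the limit: one needs control both on the linear fractional error in $g_{j,n}$ uniformly in $j$ satisfying $\tau=j$, and on the tightness of the shifted environment at the random time $\tau$. The standard tool is a Tanaka/Denisov-type decomposition of the $\mathbf{P}$-walk conditioned to stay non-negative into pre- and post-minimum excursions (as in \cite{agkv05,do}), which separates the random environment at $\tau$ from the excursion geometry and allows the overall minimum on $[\lfloor nt\rfloor,n]$ to be treated by the strong Markov property. Assumption \ref{ass2} is then used to tame the tail of $|\log(1-Q(0))|$ and hence to dominate the environmental factors uniformly, after which dominated convergence delivers the stated limit.
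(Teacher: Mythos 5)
Your skeleton---decompose at $\tau_{\lfloor nt\rfloor,n}=j$, use the branching property to write $\mathbb{P}_z(Z_{n-j}=1\mid\cdot)=z\,\mathbb{P}(Z_{n-j}=1\mid\cdot)\,\mathbb{P}(Z_{n-j}=0\mid\cdot)^{z-1}$, insert the linear fractional formula (\ref{surv1}), change measure, and normalize by Theorem \ref{theo1}---is the same as the paper's. The fatal step is your identification of the limit: you assert that $\theta_\tau\circ\Pi$ is ``asymptotically distributed as the stationary environment under $\mathbf{P}$'' and conclude that the limit is $\mathbf{E}[F_z(\Pi)]$ under plain $\mathbf{P}$. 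In the intermediately supercritical case the associated walk is recurrent under $\mathbf{P}$, so by (\ref{gleichung1}) the quenched survival probability $\mathbb{P}(Z_\infty>0\mid\Pi)$ vanishes $\mathbf{P}$-a.s.\ and your limit is identically $0$ --- it cannot be a probability distribution. (The theorem's displayed $q(z)$ with a plain $\mathbf{E}$ suffers from the same defect and is evidently a typo: the paper's proof produces $z\,\mathbf{E}^+\big[\mathbb{P}(Z_\infty>0\mid\Pi)^2\mathbb{P}(Z_\infty=0\mid\Pi)^{z-1}\big]$, so matching the displayed formula does not rescue your reasoning.) The correct mechanism is that on $\{\tau_{nt,n}=j\}$ the post-$j$ walk is forced to stay above $S_j$, so the shifted environment converges to the Doob $h$-transform $\mathbf{P}^+$, under which $S_n\to\infty$ and survival is possible. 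Making this precise is exactly the paper's Lemma \ref{le_35}, an induction on $z$ showing $\mathbf{E}\big[e^{S_n}\mathbb{P}_z(Z_n=k\mid\Pi)\mid L_n\ge 0\big]\to z\,\mathbf{E}^+[\cdots]$ via Lemma \ref{lem25}; none of this can be extracted from the meander convergence of Theorem \ref{theo2}, which concerns the rescaled path and says nothing about the local environment at a random time.

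Two further points where your plan diverges from what actually works. First, your picture of $\tau_{nt,n}$ as a genuinely spread-out random time in $[\lfloor nt\rfloor,n]$, to be handled by a Tanaka-type decomposition uniformly in $j$, is the opposite of the paper's Lemma \ref{le_39}, which shows that given $L_n\ge 0$ the minimum over $[\lfloor nt\rfloor,n]$ is attained within a \emph{bounded} window $[\lfloor nt\rfloor,\lfloor nt\rfloor+\ell]$ with high probability; together with Lemma \ref{lemhelp} (which confines the global minimum $\tau_n$ to $[0,m]$) this reduces everything to finitely many terms, and no uniformity over macroscopic $j$ is ever needed. Second, your reduction hides the normalizing constant: after conditioning on $\mathcal{F}_k$ at the global minimum, the numerator produces the sum $\sum_k\mathbf{E}\big[\mathbf{E}^+[(1-f_{0,k}(P^k_\infty))^2];\tau_k=k\big]$, which must be recognized as exactly the $\theta$ of Theorem \ref{theo1} via the representation (\ref{reptheta2}) in order to cancel against the denominator. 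Without that identification you would at best obtain the limit up to an undetermined multiplicative constant.
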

Let us briefly explain these results. Conditioned on $\{Z_n=1\}$, $S_n$ does not have a positive drift but converges -properly rescaled- to a L\'evy meander. 
Thus the conditioned environment is less favorable than in the strongly supercritical case. In the minima of the L\'evy meander, the process
 has to be small. Otherwise, many individuals would exhibit an environment favorable for growth making the event $\{Z_n=1\}$ too improbable. In 
the excursions of $S$ in between these minima, we expect (but did not prove here) that $Z$ may grow and have exponentially large values. Thus \textsl{environmental} and 
\textsl{demographic} stochasticity have equal importance in this case.

mid\section{The linear fractional offspring distribution}
\textbf{Remark.} Throughout the paper, we only consider generalized geometric offspring distributions. Only for these distributions, an explicit formula for $\mathbb{P}(Z_n=1\mid \Pi)$ exists (except for some special related cases). As we condition on this probability, a useful formula is necessary in all our proofs. In the subcritical cases, one typically conditions on $\mathbb{P}(Z_n>0\mid \Pi)$. For this probability, a useful formula is known also for general offspring distributions (see e.g. \cite{ag}), involving the second factorial moments and not depending on the fine structure of the offspring distributions. Thus in contrast to the subcritical cases, we can only proof our results here for linear fractional offspring distributions. As in contrast to $\mathbb{P}(Z_n>0\mid\Pi)$, $\mathbb{P}(Z_n=1\mid \Pi)$ seems to depend on the fine structure of the offspring distributions (see \cite{BB12}), generalizing the results seems to be difficult. However, the parallels to the subcritical case indicate that the theorems might be qualitively true for more general offspring distributions (aside from the explicit limiting distributions proved here).\qed \medskip\\ 
Now, we present details on generalized geometric offspring distributions. Let $\xi$ be a random variable on $\mathbb{N}_0$ with distribution $q$ and let
\[q(0)=a \ , \ q(k)=\frac{(1-a)(1-p)}{p}  \ p^k \ ,\]
where $a\in[0,1)$ and $p\in(0,1)$. The expectation of $\xi$ is
\begin{align*}
 m_\xi:= \sum_{k=1}^\infty k\ q(k)= \sum_{k=1}^\infty (1-a)(1-p) \ k  \ p^{k-1} = (1-a)(1-p)\cdot \frac{1}{(1-p)^2} = \frac{1-a}{1-p} \ .
\end{align*}
Inserting this, we get the formula
\begin{align*}
 q(k)=(1-a)(1-p) p^{k-1} = \frac{1}{m_\xi} (1-a)^2 p^{k-1} = \frac{(1-q(0))^2}{m_\xi} p^{k-1} \ ,
\end{align*}
i.e. uniformly in $k$,
\[0\leq m_\xi\cdot q(k) \leq 1\ .\]
This fact will be used later. We can also rewrite the probability weights as
\begin{align}\label{geo22}
 q(k) =q(1) p^{k-1}\ .
\end{align}
Let us now turn to the concatenation of linear fractional generating functions. As it is proved e.g. in \cite{kozlov06}[p. 156, Equation (6)], if the offspring 
distributions are linear fractional, then also $Z_n$, conditioned on $\Pi$ has a linear fractional offspring distribution given by
\begin{align*}
 \mathbb{P}(Z_n=z\mid\Pi) = \frac{e^{-S_n}}{\big(e^{-S_n}+\sum_{k=0}^{n-1} 
\eta_{k+1}e^{-S_k}\big)^2} \Big(\frac{\sum_{k=0}^{n-1} \eta_{k+1}e^{-S_k}}{e^{-S_n}+\sum_{k=0}^{n-1} \eta_{k+1}e^{-S_k}}\Big)^{z-1}\ \text{a.s.} ,
\end{align*}
 where (see \cite{kozlov06}[p. 155]) $\eta_k:=\eta_{Q_k}$ and
\begin{align*}
 \eta_q= \frac{\sum_{j=0}^\infty j (j-1) q(j)}{2m_{q}^2} = \frac{\frac{(1-a)2p}{(1-p)^2}}{2m_{q}^2}=\frac{p}{1-a}
\end{align*}
are the standardized second factorial moments of $Q_k$. Note that by summing over $z\in\mathbb{N}$, we get that a.s. 
\begin{align*}
 \mathbb{P}(Z_n>0\mid \Pi)=\frac{1}{e^{-S_n}+\sum_{k=0}^{n-1} 
\eta_{k+1}e^{-S_k}}\ .
\end{align*}
Defining 
\begin{align}\label{eqdefh}
 H_n:= \frac{\sum_{k=0}^{n-1} \eta_{k+1}e^{-S_k}}{e^{-S_n}+\sum_{k=0}^{n-1} \eta_{k+1}e^{-S_k}}
\end{align}
we get the convenient formula
\begin{align}\label{geo2}
 \mathbb{P}(Z_n=z\mid\Pi)= e^{-S_n} \mathbb{P}(Z_n>0\mid\Pi)^2 H_n^{z-1} \quad \text{a.s.} 
\end{align}
In particular, we will use (as already proved in \cite{BB12})
\begin{align}\label{surv1} 
\mathbb{P}(Z_n=1\mid\Pi) = e^{-S_n}\ \mathbb{P}(Z_n>0\mid\Pi)^2 \ \text{a.s.} 
\end{align}
By (\ref{eqdefh}), if $S_n\rightarrow\infty$ a.s. as $n\rightarrow\infty$, then also $H_n\rightarrow 1$ a.s.  \medskip\\
For our proofs, we require the generating function of a distribution $q$,
\[f(s):=\sum_{k=0}^\infty s^k \ q(k) \quad , s\in[0,1] \ .\]
Let $f_k$ be the generating function of $q_k$ and set
\[f_{0,n}(s):=\mathbb{E}\big[s^{Z_n} \mid Z_0=1\big] \ .\]
As it is well-known, $f_{0,n}$ is the concatenation of the generating functions of each generation (see e.g. \cite{agkv05}[Equation (3.2)]) , i.e. 
\[f_{0,n}(s)= f_1(f_2(\ldots (f_n(s))\ldots )) \ .\]
More generally, let for $k\leq n$
\[f_{k,n}(s):=\mathbb{E}\big[s^{Z_n} \mid Z_k=1\big] \ \]
 resp.
\[f_{k,n}(s)= f_{k+1}(f_{k+2}(\ldots (f_n(s))\ldots )) \ .\]
Thus we get the formula
\begin{align}\label{eq_gen1}
 \mathbb{P}(Z_n>0\mid\Pi)=1- f_{0,n}(0) \quad \text{a.s.} 
\end{align}
\textbf{Remark.} In \cite{agkv05}[Assumption B2], it is required that for some $\varepsilon>0$
\[\mathbf{E}\big[\big(\log^+ (\eta)\big)^{\alpha+\varepsilon}\big]<\infty\ ,\]
where $\log^+(x):=\log(\max(x,1))$. In our context, inserting the formula for $\eta$, this means
\[\mathbf{E}\big[\big(\log^+ \big(\frac{p}{1-a}\big)\big)^{\alpha+\varepsilon}\big]<\infty \ .\]
As $p<1$ and $\alpha\in (0,2]$ and $a=Q(0)$, this condition is implied by
\[\mathbf{E}\big[\big(\log(1-Q(0))\big)^{\alpha+\varepsilon}\big]<\infty\ .\]

\section{Properties of random walks}
Define for $n\geq 1$
\begin{align*}
 L_n:=\min\{S_1,\ldots, S_n\} \quad , \quad M_n:=\max\{S_1,\ldots,S_n\} 
\end{align*}
and set $L_0=M_0=0$. Recall the definition of $\tau_n$, 
\begin{align*}
 \tau_n= \min\{ 0\leq k\leq n \mid S_k=\min(0;L_n)\} \ . 
\end{align*}
Next, we require the renewal function $u: \mathbb{R}\rightarrow\mathbb{R}$ defined by
\begin{align*}
 u(x)&=1+\sum_{k=1}^\infty \mathbf{P}(-S_k\leq x, M_k<0) \quad , x\geq 0 \ ,\\
 u(x)&=0 \ , x<0\ .
\end{align*}
For more details, see e.g. \cite{agkv05}. It is well-known that $u(0)=1$. Let  
\[\mathbf{P}_x\big((S_1, \ldots,S_n) \in \cdot\big)=\mathbf{P}\big((S_1, \ldots,S_n) \in \cdot\mid S_0=x\big)\ .\]
Here, we require that under Assumption \ref{ass1}, for every $x\geq 0$ and as 
$n\rightarrow\infty$
\begin{align}\label{eq_min}
 \mathbf{P}_x(L_n\geq -x) \sim u(x)\ n^{-(1-\rho)}\ l(n) \ ,
\end{align}
where the sequence $l(n)$ is slowly varying at infinity (see \cite{agkv05}[Lemma 2.1]).\medskip\\
Furthermore, we require the $h$-transform describing the random walk conditioned on never entering $(-\infty,0)$ where we will denote the corresponding measure by $\mathbf{P}^+$. More precisely, for every oscillating random walk 
the renewal function $u$ has the property
\[u(x) = \mathbf{E}\big[u(x+X);x+X\geq 0\big] \ , \ x\geq 0 \ .\]
This martingale property allows to define the measure $\mathbf{P}^+$. The construction of this measure is described in detail in e.g. \cite{agkv05, do}. Here, we only briefly recall the definition. \\
Let $\mathcal{F}_n=\sigma\big(Q_1,\ldots, Q_n, Z_0,\ldots,Z_n\big)$ be the $\sigma$-algebra generated by the branching process and its environment 
up to generation $n$. Let $R_n$ be a bounded random variable adapted to the filtration $(\mathcal{F}_n)_{n\in\mathbb{N}}$. Then $\mathbf{P}^+$ is defined by
\begin{align*}
 \mathbf{E}_x^+\big[R_n\big]= \frac{1}{u(x)} \mathbf{E}_x\big[R_nu(S_n); L_n\geq 0 \big] \ , \ x\geq 0 \ .
\end{align*}
We will later use the following result from \cite{agkv05}[Lemma 2.5].
\begin{lemma}\label{lem25}
Let $U_n$ be a uniformly bounded sequence of random variables adapted to the filtration $(\mathcal{F}_n)_{n\in\mathbb{N}}$. If $U_n\rightarrow U_\infty$ $\mathbf{P}^+$-a.s. for some 
limiting random variable $U_\infty$, then as $n\rightarrow\infty$, it holds that
\[ \mathbf{E}\big[U_n \mid L_n \geq 0 \big]\rightarrow \mathbf{E}^+\big[U_\infty\big] \ .\]
\end{lemma}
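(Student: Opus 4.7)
The plan is to reduce $\mathbf{E}[U_n\mid L_n\ge0]$ to its value at a truncation time $k$ via the Markov property, first let $n\to\infty$ (which converts the $\mathbf{P}$-conditioning into a $\mathbf{P}^+$-expectation), and then let $k\to\infty$. Concretely, I would split
\[\mathbf{E}[U_n\,\ind_{\{L_n\ge0\}}]\;=\;\mathbf{E}[U_k\,\ind_{\{L_n\ge0\}}]\;+\;\mathbf{E}[(U_n-U_k)\,\ind_{\{L_n\ge0\}}]\]
and handle the two pieces separately.

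For the first piece, conditioning on $\mathcal{F}_k$ and using that the post-$k$ increments of $S$ are independent of $\mathcal{F}_k$, the event $\{L_n\ge0\}$ factors into $\{L_k\ge0\}\cap\{\min_{1\le j\le n-k}(S_{k+j}-S_k)\ge-S_k\}$, giving
\[\mathbf{E}[U_k\,\ind_{\{L_n\ge0\}}]\;=\;\mathbf{E}\bigl[U_k\,\ind_{\{L_k\ge0\}}\,\mathbf{P}_0(L_{n-k}\ge -S_k)\bigr].\]
By (\ref{eq_min}) the inner probability is asymptotic to $u(S_k)(n-k)^{-(1-\rho)}l(n-k)$, while $\mathbf{P}(L_n\ge0)\sim n^{-(1-\rho)}l(n)$ with $u(0)=1$. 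Slow variation of $l$ absorbs the $(n-k)/n$ discrepancy, and a standard uniform estimate on the ratio $\mathbf{P}_0(L_m\ge -x)/\mathbf{P}(L_m\ge0)$ in $x\ge0$ and $m$ justifies dominated convergence. This yields
\[\lim_{n\to\infty}\mathbf{E}[U_k\mid L_n\ge0]\;=\;\mathbf{E}[U_k\,u(S_k)\,\ind_{\{L_k\ge0\}}]\;=\;\mathbf{E}^+[U_k]\]
directly from the definition of $\mathbf{P}^+$ with $x=0$. Since the $U_k$ are uniformly bounded and $U_k\to U_\infty$ $\mathbf{P}^+$-almost surely, bounded convergence under $\mathbf{P}^+$ then gives $\mathbf{E}^+[U_k]\to\mathbf{E}^+[U_\infty]$ as $k\to\infty$.

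The main obstacle will be the error term: I must show
\[\lim_{k\to\infty}\,\limsup_{n\to\infty}\,\mathbf{E}\bigl[\,|U_n-U_k|\,\bigm|\,L_n\ge0\bigr]\;=\;0.\]
The natural bound $|U_n-U_k|\le V_k:=\sup_{m\ge k}|U_m-U_k|$ is not immediately useful because $V_k$ is not $\mathcal{F}_k$-measurable, so the previous step does not apply to it directly. My plan is to approximate $V_k$ by its $\mathcal{F}_N$-measurable truncations $V_k^{(N)}:=\sup_{k\le j\le N}|U_j-U_k|$, apply the finite-horizon limit above to each such $V_k^{(N)}$, pass to the limit $N\to\infty$ by monotone convergence under $\mathbf{P}^+$, and finally let $k\to\infty$, using that the $\mathbf{P}^+$-a.s.\ convergence of $U_n$ to $U_\infty$ makes the sequence $\mathbf{P}^+$-Cauchy so that $V_k\to0$. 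The technical heart of the argument is the interchange of the $n$- and $N$-limits, which requires the uniform-in-starting-point control of (\ref{eq_min}) alluded to above.
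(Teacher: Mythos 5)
The paper itself does not prove this lemma --- it is quoted verbatim from \cite{agkv05}[Lemma 2.5] --- so your proposal can only be measured against the argument in that source. Your treatment of the main term is correct and is exactly the standard one: factorize $\{L_n\geq 0\}$ at the fixed time $k$, use $\mathbf{P}(L_{n-k}\geq -x)\sim u(x)\,\mathbf{P}(L_{n-k}\geq 0)$ together with the uniform bound $\mathbf{P}(L_m\geq -x)\leq c\,u(x)\,\mathbf{P}(L_m\geq 0)$ for domination, identify the limit $\mathbf{E}\big[U_k\,u(S_k); L_k\geq 0\big]=\mathbf{E}^+[U_k]$, and then let $k\to\infty$ by bounded convergence under $\mathbf{P}^+$.

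The gap is in the error term, and it sits exactly where you place it. The bound $|U_n-U_k|\leq V_k^{(N)}:=\sup_{k\leq j\leq N}|U_j-U_k|$ holds only for $n\leq N$, so to let $n\to\infty$ you must take $N=n$; what you then need is $\limsup_n\mathbf{E}\big[V_k^{(n)}\mid L_n\geq 0\big]\leq \mathbf{E}^+[V_k]$, which is precisely an instance of the lemma being proved, applied to the monotone adapted sequence $n\mapsto V_k^{(n)}$. The interchange of the $n$- and $N$-limits is therefore not a technicality but the entire content of the statement, and uniform control in (\ref{eq_min}) alone does not deliver it: on $\mathcal{F}_n$ the density of $\mathbf{P}(\cdot\mid L_n\geq 0)$ with respect to $\mathbf{P}^+$ is $1/(u(S_n)\mathbf{P}(L_n\geq 0))$ on $\{L_n\geq 0\}$, which is of order $1/\mathbf{P}(L_n\geq 0)$ wherever $S_n$ stays bounded, so the two measures are not uniformly comparable and no domination of non-$\mathcal{F}_k$-measurable quantities follows. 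The missing idea is a first-passage decomposition: fix $\varepsilon>0$, set $T:=\inf\{j\geq k:\ |U_j-U_k|>\varepsilon\}$, bound $\mathbf{E}\big[|U_n-U_k|;L_n\geq 0\big]\leq \varepsilon\,\mathbf{P}(L_n\geq 0)+2C\,\mathbf{P}(T\leq n,\ L_n\geq 0)$, and split the last probability over the adapted events $\{T=j\}\in\mathcal{F}_j$. Applying the Markov property at the variable time $j$ and the uniform bound above turns each summand into $c\,\mathbf{E}\big[u(S_j);T=j,L_j\geq 0\big]\mathbf{P}(L_{n-j}\geq 0)=c\,\mathbf{P}^+(T=j)\,\mathbf{P}(L_{n-j}\geq 0)$, and the resulting sum is of order $\mathbf{P}^+(T<\infty)\,\mathbf{P}(L_n\geq 0)$ (the indices $j$ close to $n$ require the additional estimate of \cite{agkv05}[Lemma 2.2]). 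Since $\mathbf{P}^+(T<\infty)\to 0$ as $k\to\infty$ by the assumed $\mathbf{P}^+$-a.s.\ convergence, this closes the argument; without some device of this kind that localizes the first time the oscillation exceeds $\varepsilon$, your plan remains circular.
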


\section{Proof of theorems in the strongly supercritical case}
We assume throughout this section that $\mathbb{E}\big[Xe^{-X}\big]>0$ and thus (using Definition (\ref{change1}))
\[\mathbf{E}[X]>0 \ .\]
\subsection{Proof of Theorem \ref{thstrong1}} Using (\ref{surv1}) and the change of measure (\ref{change1}), we get that
\begin{align*}
\mathbb{P}(Z_n=1) &= \mathbb{E}\big[e^{-S_n}\mathbb{P}(Z_n>0\mid \Pi)^2\big]\\
&=\gamma^n \mathbf{E}\big[\mathbb{P}(Z_n>0\mid\Pi)^2\big] \ .
\end{align*}
Under $\mathbf{P}$, $Z$ is still a supercritical branching process in random environment with linear fractional 
offspring distributions. As it is proved in \cite{sm}[Theorem 3.1], under the condition $\mathbf{E}\big[\log(1-Q(0))\big]>-\infty$,
\[\lim_{n\rightarrow\infty} \mathbf{P}(Z_n>0)=\mathbf{P}(Z_\infty>0)>0 \ .\]
As the survival probability is monotonically decreasing, the limit above exists and we conclude  
\[ \mathbf{P}\big(\mathbb{P}(Z_\infty>0\mid\Pi)>0\big)>0 \ .\]
By monotonicity, we may interchange the limit and the integration and get that 
\begin{align*}
\lim_{n\rightarrow\infty} \gamma^{-n} \mathbb{P}(Z_n=1) &=\mathbf{E}\big[\mathbb{P}(Z_\infty>0\mid\Pi)^2\big]=:\vartheta>0 \ .
\end{align*}
This proves Theorem \ref{thstrong1}. \qed
\subsection{Proof of Theorem \ref{theostrong23}}
Using (\ref{geo2}), we have a.s. 
\begin{align*}
 \mathbb{P}(Z_n=z\mid \Pi)= e^{-S_n}\mathbb{P}(Z_n>0\mid\Pi)^2 H_n^{z-1} \ ,
\end{align*}
where
\begin{align*}
 H_n= \frac{\sum_{k=0}^{n-1} \eta_{k+1}e^{-S_k}}{e^{-S_n}+\sum_{k=0}^{n-1} \eta_{k+1}e^{-S_k}} \ .
\end{align*}
Using the change of measure (\ref{change1}) yields
\begin{align}
 \mathbb{P}(Z_n=z)&= \mathbb{E}\big[\mathbb{P}(Z_n=z\mid \Pi)\big]\nonumber\\
&= \gamma^n \mathbf{E}\big[\mathbb{P}(Z_n>0\mid \Pi)^2 H_n^{z-1}\big]\ . \label{eq_st10}
\end{align}
Under $\mathbf{P}$, $S_n\rightarrow \infty$ a.s. and therefore $H_n\rightarrow 1$ a.s. As $H_n$ is bounded by 1, we may use the dominated convergence theorem to conclude that
\begin{align}
 \lim_{n\rightarrow\infty} \gamma^{-n} \mathbb{P}(Z_n=z)= \mathbf{E}\big[\mathbb{P}(Z_\infty>0\mid \Pi)^2\big] \label{eq_stlim}
\end{align}
for every $z\in\mathbb{N}$. Inserting this yields for every $c\in\mathbb{N}$ and $1\leq z\leq c$
\begin{align*}
 \mathbb{P}(Z_n=z\mid 1\leq Z_n\leq c) &= \frac{\mathbb{P}(Z_n=z) }{\sum_{j=1}^c\mathbb{P}(Z_n=j) }=\frac{\gamma^{-n}\mathbb{P}(Z_n=z) }{\sum_{j=1}^c\gamma^{-n}\mathbb{P}(Z_n=j) } \ .
\end{align*}
Taking the limit $n\rightarrow\infty$ and using (\ref{eq_stlim}) yields the theorem. \qed
\subsection{Proof of Theorem \ref{thstrong2}} 
Let $z\in\mathbb{N}$ and $t\in (0,1)$ be fixed. Then using the branching property, independence of the environment and (\ref{surv1}) yield 
\begin{align*}
\mathbb{P}(Z_{\lfloor nt\rfloor} = z, Z_n=1) &=\mathbb{E}\big[\mathbb{P}(Z_{\lfloor nt\rfloor} = z, Z_n=1  \mid\Pi) \big]= \mathbb{P}(Z_{\lfloor nt\rfloor} = z) \mathbb{P}_z(Z_{n-\lfloor nt\rfloor}=1)\ .
\end{align*}
Conditioning on the environment and using (\ref{eq_stlim}), we get that as $n\rightarrow\infty$
\[\mathbb{P}(Z_{\lfloor nt\rfloor} = z)= \gamma^{\lfloor nt\rfloor}\big(\mathbf{E}\big[\mathbb{P}(Z_{\infty} >0|\Pi)^2\big]+o(1)\big)\ . \]
Starting from $z$-many individuals, $\{Z_n=1\}$ implies that $z-1$ of the $z$ subtrees must become extinct before time $n$. As conditioned on $\Pi$, all 
subtrees are independent and inserting (\ref{surv1}), we get that 
\begin{align*}
 \mathbb{P}_z&(Z_{n-\lfloor nt\rfloor}=1)=z\ \mathbb{E}\big[\mathbb{P}(Z_{n-\lfloor nt\rfloor} = 1\mid\Pi)\mathbb{P}(Z_{n-\lfloor nt\rfloor} =0\mid\Pi)^{z-1}\big]\\
&=z\ \mathbb{E}\big[e^{-S_{n-\lfloor nt\rfloor}} \mathbb{P}(Z_{n-\lfloor nt\rfloor} >0\mid\Pi)^2\mathbb{P}(Z_{n-\lfloor nt\rfloor} =0\mid\Pi)^{z-1}\big]\\
&= z\ \gamma^{n-\lfloor nt\rfloor} \mathbf{E}\big[\mathbb{P}(Z_{n-\lfloor nt\rfloor} >0\mid\Pi)^2\mathbb{P}(Z_{n-\lfloor nt\rfloor} =0\mid\Pi)^{z-1}\big] \ .
\end{align*}
By the dominated convergence theorem and monotonicity and as $\mathbf{P}(Z_{\infty}>0)>0$, 
\begin{align*}\lim_{n\rightarrow\infty}&\mathbf{E}\big[\mathbb{P}(Z_{n-\lfloor nt\rfloor} >0\mid\Pi)^2\mathbb{P}(Z_{n-\lfloor nt\rfloor} =0\mid\Pi)^{z-1}\big] \\
& = \mathbf{E}\big[\mathbb{P}(Z_{\infty} >0\mid\Pi)^2\mathbb{P}(Z_{\infty} =0\mid\Pi)^{z-1}\big]>0\ .
\end{align*}
Altogether, using Theorem \ref{thstrong1}, we get for $n\rightarrow\infty$
\begin{align*}
\mathbb{P}&(Z_{\lfloor nt\rfloor} = z\mid Z_n=1) =\frac{\mathbb{P}(Z_{\lfloor nt\rfloor} = z, Z_n=1)}{\mathbb{P}(Z_n=1)} \\
&= \frac{\mathbb{P}(Z_{\lfloor nt\rfloor} = z) \mathbb{P}_z(Z_{n-\lfloor nt\rfloor}=1)}{\gamma^n (\theta+o(1))} \\
&= \frac{\gamma^{\lfloor nt\rfloor} \big(\mathbf{E}\big[\mathbb{P}(Z_\infty>0\mid \Pi)^2\big]+o(1)\big) z \gamma^{n-\lfloor nt\rfloor} \big(\mathbf{E}\big[\mathbb{P}(Z_{\infty} >0\mid\Pi)^2\mathbb{P}(Z_{\infty} =0\mid\Pi)^{z-1}\big]+o(1)\big)}{\gamma^n (\vartheta+o(1))}\ .
\end{align*}
Recall that $\vartheta= \mathbf{E}\big[\mathbb{P}(Z_\infty>0\mid\Pi)^2\big]$.  Taking the limit $n\rightarrow\infty$ yields
\begin{align}\label{eq_dis1}
\lim_{n\rightarrow\infty} \mathbb{P}(Z_{\lfloor nt\rfloor} = z\mid Z_n=1) =
z\ \mathbf{E}\big[\mathbb{P}(Z_{\infty} >0\mid\Pi)^2\mathbb{P}(Z_{\infty} =0\mid\Pi)^{z-1}\big]=:r_z\ . 
\end{align}
In particular, this distribution does not depend on $t$. Summing over $z\in\mathbb{N}$, as for $x\in [0,1)$,
\[\sum_{z=0}^\infty z x^{z-1}=\frac{1}{(1-x)^2}\, \]
 and $\mathbb{P}(Z_\infty=0\mid\Pi)=1-\mathbb{P}(Z_\infty>0\mid\Pi)$ we see that (\ref{eq_dis1}) is indeed a probability distribution on $\mathbb{N}$. \qed\medskip\\

\section{Proof of theorems in the intermediately subcritical case}\label{secint}
Define for $0\leq k\leq n$
\begin{align}\label{min1}L_{k,n} = \min_{0\leq j\leq n-k} (S_{k+j}-S_k) \ . \end{align}
Note that with this definition
\begin{align}\label{eq_tn} \{\tau_n=k\}=\{\tau_k=k,L_{k,n}\geq 0\}\ .\end{align}
Due to independence of the increments, $L_{k,n}$ and $L_{n-k}$ have the same distribution. \\
First, we show that $Z_n$ being small implies that with a high probability, the associated random walk attains its minimum at some early generation.
\begin{lemma}\label{lemhelp}
For every $\varepsilon>0$ and $z\in\mathbb{N}$ there is an $m=m(\varepsilon)\in \mathbb{N}$ such that
\[ \mathbb{P}(Z_n=z, \tau_n>m)<\varepsilon \gamma^n \mathbf{P}(L_n\geq 0) \]
\end{lemma}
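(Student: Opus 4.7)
The plan is to pass to the $\mathbf{P}$-side via the change of measure (\ref{change1}) and the trivial bound $H_n^{z-1}\le 1$ in (\ref{geo2}), reducing the claim to
\[
\mathbf{E}\big[\mathbb{P}(Z_n>0\mid\Pi)^2;\,\tau_n>m\big]<\varepsilon\,\mathbf{P}(L_n\ge 0).
\]
On $\{\tau_n=k\}$ with $k\ge 1$ the Markov-type estimate (\ref{gleichung1}) gives $\mathbb{P}(Z_n>0\mid\Pi)\le e^{S_k}$ (since $S_k=\min(0,L_n)<0$ there). Combining this with the factorization $\{\tau_n=k\}=\{\tau_k=k,\,L_{k,n}\ge 0\}$ from (\ref{eq_tn}) and the independence under $\mathbf{P}$ of the pre-$k$ and post-$k$ increments of $S$ (so that $L_{k,n}$ is distributed as $L_{n-k}$ and is independent of $(S_1,\ldots,S_k)$) leads to the convolution bound
\[
\mathbf{E}\big[\mathbb{P}(Z_n>0\mid\Pi)^2;\,\tau_n>m\big]\ \le\ \sum_{k=m+1}^{n}a_k\,b_{n-k},
\]
where I set $a_k:=\mathbf{E}[e^{2S_k};\tau_k=k]$ and $b_l:=\mathbf{P}(L_l\ge 0)$.

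The next step is to show $\sum_{k\ge 0}a_k<\infty$. For $k\ge 1$, $\{\tau_k=k\}$ is exactly the event that $k$ is a strict descending ladder epoch of $S$; writing $T_1<T_2<\cdots$ for these epochs and $\Lambda_j:=S_{T_j}$ for the corresponding ladder heights, the $\Lambda_j$ form a random walk with i.i.d.\ strictly negative increments, because under $\mathbf{P}$ the walk $S$ is oscillating (by (\ref{inter1}) combined with Assumption \ref{ass1}). Hence $r:=\mathbf{E}[e^{2\Lambda_1}]<1$ and
\[
\sum_{k\ge 1}a_k=\sum_{j\ge 1}\mathbf{E}\big[e^{2\Lambda_j}\big]=\sum_{j\ge 1}r^j=\frac{r}{1-r}<\infty.
\]

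It remains to carry out the convolution tail estimate. By (\ref{eq_min}) with $x=0$, $b_l$ is monotonically non-increasing and regularly varying at infinity of index $-(1-\rho)$. A standard regular-variation argument (the ratios $b_{n-k}/b_n\to 1$ pointwise in $k$, with Potter-type control for $k$ close to $n$) together with the summability of $(a_k)$ gives
\[
\lim_{n\to\infty}\frac{1}{b_n}\sum_{k=m+1}^{n} a_k\,b_{n-k}=\sum_{k>m} a_k.
\]
Choosing $m=m(\varepsilon)$ so that $\sum_{k>m}a_k<\varepsilon$, and noting that for $n\le m$ the statement is trivial since $\tau_n\le n\le m$, the lemma follows. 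The main technical obstacle is the last display: the contribution from indices $k$ close to $n$, where $b_{n-k}$ is not close to $b_n$, must be controlled uniformly in $n$, which is where Potter's bounds and the tail summability of $(a_k)$ are combined. The reduction to the convolution and the geometric summation of ladder-height contributions are both short and clean.
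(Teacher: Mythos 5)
Your reduction is exactly the paper's: bound $\mathbb{P}(Z_n=z\mid\Pi)\le e^{-S_n}\mathbb{P}(Z_n>0\mid\Pi)^2$ via (\ref{geo2}), apply the change of measure, use (\ref{gleichung1}) together with the factorization $\{\tau_n=k\}=\{\tau_k=k,\,L_{k,n}\ge 0\}$ and independence to arrive at $\gamma^n\sum_{k>m}a_k\,b_{n-k}$ with $a_k=\mathbf{E}[e^{2S_k};\tau_k=k]$ (the paper uses $e^{S_k}$ after the extra bound $e^{2L_n}\le e^{L_n}$, which changes nothing). Your ladder-height argument for $\sum_k a_k=r/(1-r)<\infty$ is correct and clean. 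The difference is in the last step: the paper simply invokes \cite{agkv05}[Lemma 2.2] (with $u(x)=e^{-x}$) to get $\sum_{k=m+1}^n a_k\,b_{n-k}\le\varepsilon\,b_n$, whereas you try to derive this from regular variation of $b_l$ plus summability of $(a_k)$.

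That last step is where there is a genuine gap. Summability of $(a_k)$ together with Potter bounds does \emph{not} give
\[
\sum_{k=m+1}^{n}a_k\,b_{n-k}\le\Big(\sum_{k>m}a_k+o(1)\Big)\,b_n .
\]
The problem is the boundary terms with $n-k=O(1)$: there $b_{n-k}$ is of order one while $b_n\to 0$, so already the single term $k=n$ forces you to prove $a_n=o(b_n)$, i.e. $\mathbf{E}[e^{2S_n};\tau_n=n]=o\big(n^{-(1-\rho)}l(n)\big)$, and more generally $\sum_{n-K\le k\le n}a_k=o(b_n)$. Mere summability of $(a_k)$ gives no pointwise rate (a summable sequence can have $a_n\gg b_n$ along a sparse subsequence), and Potter's bounds say nothing about $b_{n-k}/b_n$ when $n-k$ stays bounded. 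The needed rate ($a_k$ decaying like $b_k/k$ up to slowly varying corrections, obtained by duality $\{\tau_k=k\}=\{M_k<0\}$ and asymptotics of $\mathbf{E}[e^{S_k};M_k<0]$) is precisely the nontrivial content of \cite{agkv05}[Lemma 2.2]; your argument implicitly assumes it rather than proving it. The fix is either to cite that lemma as the paper does (your $u(x)=e^{-2x}$ is nonnegative, nonincreasing and integrable on $[0,\infty)$, so it applies), or to supply the boundary estimate separately — but as written the "standard regular-variation argument" does not close.
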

\textsl{Proof.} Decomposing at the minimum and using (\ref{geo2}) with $H_n\leq 1$ yields
\begin{align*}
  \mathbb{P}(Z_n=z, \tau_n>m) &=\sum_{k=m+1}^n \mathbb{E}\big[\mathbb{P}(Z_n=z\mid\Pi); \tau_n=k\big]\\
 &\leq\sum_{k=m+1}^n \mathbb{E}\big[\mathbb{P}(Z_n=1\mid\Pi); \tau_n=k\big]\\
&=\sum_{k=m+1}^n \mathbb{E}\big[e^{-S_n} \mathbb{P}(Z_n>0\mid\Pi)^2 ; \tau_n=k\big]\ .
\end{align*}
Next using the standard estimate (\ref{gleichung1}), (\ref{eq_tn}) and the change of measure, we get that
\begin{align*}
 \mathbb{P}(Z_n=z, \tau_n>m)&\leq \sum_{k=m+1}^n \mathbb{E}\big[e^{-S_n+2L_n}; \tau_k=k, L_{k,n}\geq 0 \big] \\
&= \gamma^n \sum_{k=m+1}^n \mathbf{E}\big[e^{L_n}; \tau_k=k, L_{k,n}\geq 0 \big] \\
&= \gamma^n \sum_{k=m+1}^n \mathbf{E}\big[e^{S_k}; \tau_k=k\big] \mathbf{P}(L_{n-k}\geq 0)\ .
\end{align*}
Using \cite{agkv05}[Lemma 2.2] with $u(x)=e^{-x}$, we get for $m\in\mathbb{N}$ large enough
\begin{align*}
 \sum_{k=m+1}^n \mathbf{E}\big[e^{S_k}; \tau_k=k\big] \mathbf{P}(L_{n-k}\geq 0)<\varepsilon \ \mathbf{P}(L_n\geq 0) \ .
\end{align*}
This yields the claim. \qed \medskip\\
In particular, we have proved that for every $\varepsilon>0$, if $m$ is large enough
\begin{align}\label{lem11}
 \limsup_{n\rightarrow\infty} \mathbf{P}(L_n\geq 0)^{-1} \sum_{k=m+1}^n \mathbf{E}\big[\mathbb{P}(Z_k>0\mid\Pi)^2; \tau_n=k\big]<\varepsilon \ .
\end{align}

\subsection{Proof of Theorem \ref{theo1}}
In the following proofs, we will require the shift of the environment. Let for $\Pi=(Q_1,Q_2,\ldots)$ and 
$k\in\mathbb{N}$ denote the environment shifted by $k$ generations by
\[\theta_k\circ \Pi=(Q_{k+1},Q_{k+2},\ldots)\ .\]
The main idea of the following lemma is to use results for critical BPRE, i.e. \cite{agkv05}[Lemma 4.1].
\begin{lemma}\label{lem10}
  Under Assumption \ref{ass1}, there is a positive and finite constant $\theta$ such that as $n\rightarrow\infty$
\[\mathbf{E}[\mathbb{P}(Z_n>0\mid\Pi)^2] \sim \ \theta\  \mathbf{P}(L_n\geq 0) \ .\]
\end{lemma}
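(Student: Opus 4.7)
The plan is to decompose $\mathbf{E}[\mathbb{P}(Z_n>0\mid\Pi)^2]$ according to the location $\tau_n$ of the global minimum of $S$ and to treat the ``early minimum'' and ``late minimum'' regimes separately. The combinatorial backbone is the following factorisation of the survival formula: writing $\tilde\Pi=\theta_k\circ\Pi$ and $\tilde S_j=S_{k+j}-S_k$, the identity $\mathbb{P}(Z_n>0\mid\Pi)^{-1}=e^{-S_n}+\sum_{j=0}^{n-1}\eta_{j+1}e^{-S_j}$ splits, after multiplying by $e^{S_k}$, as
\[
\mathbb{P}(Z_n>0\mid\Pi)=\frac{e^{S_k}}{A_k+\mathbb{P}(\tilde Z_{n-k}>0\mid\tilde\Pi)^{-1}},\qquad A_k:=\sum_{j=0}^{k-1}\eta_{j+1}e^{S_k-S_j}.
\]
Here $A_k$, $S_k$ and $\{\tau_k=k\}$ are $\mathcal{F}_k$-measurable while $\tilde\Pi$ is independent of $\mathcal{F}_k$, and (\ref{eq_tn}) gives $\{\tau_n=k\}=\{\tau_k=k,\,L_{k,n}\ge 0\}$.

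For the late-minimum regime I would reuse the crude bound $\mathbb{P}(Z_n>0\mid\Pi)^2\le e^{2L_n}$ and replay the rearrangement of Lemma \ref{lemhelp}, now applying \cite{agkv05}[Lemma 2.2] with the weight $u(x)=e^{-2x}$ instead of $e^{-x}$, to obtain
\[
\mathbf{E}[\mathbb{P}(Z_n>0\mid\Pi)^2;\tau_n>m]\le\sum_{k=m+1}^n\mathbf{E}[e^{2S_k};\tau_k=k]\,\mathbf{P}(L_{n-k}\ge 0)=\varepsilon(m)\,\mathbf{P}(L_n\ge 0)
\]
with $\varepsilon(m)\to 0$ as $m\to\infty$. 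For the early-minimum regime, fix $k\le m$ and use the factorisation: conditioning on $\mathcal{F}_k$ and invoking independence of $\tilde\Pi$,
\[
\mathbf{E}[\mathbb{P}(Z_n>0\mid\Pi)^2;\tau_n=k]=\mathbf{E}\bigl[e^{2S_k}\,\ind_{\{\tau_k=k\}}\,g_{n-k}(A_k)\bigr],
\]
where for $a\ge 0$
\[
g_m(a):=\mathbf{E}\!\left[\frac{\ind_{\{\tilde L_m\ge 0\}}}{\bigl(a+\mathbb{P}(\tilde Z_m>0\mid\tilde\Pi)^{-1}\bigr)^2}\right].
\]
The integrand is bounded by $1$, and under $\mathbf{P}^+$ one has $\tilde S_j\to\infty$ so that $\mathbb{P}(\tilde Z_m>0\mid\tilde\Pi)^{-1}\to Y^{-1}:=\sum_{j\ge 0}\tilde\eta_{j+1}e^{-\tilde S_j}$ a.s.\ (finite by Assumption \ref{ass2}, as in \cite{agkv05}[Lemma 4.1]). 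Lemma \ref{lem25} combined with (\ref{eq_min}) then yields, for each fixed $a$,
\[
g_m(a)\sim h(a)\,\mathbf{P}(L_m\ge 0),\qquad h(a):=\mathbf{E}^+\!\left[\frac{1}{(a+Y^{-1})^2}\right]\le 1.
\]
Since $g_{n-k}/\mathbf{P}(L_n\ge 0)$ is uniformly bounded and $A_k$ is $\mathcal{F}_k$-measurable, a routine dominated convergence argument (together with $\mathbf{P}(L_{n-k}\ge 0)\sim\mathbf{P}(L_n\ge 0)$ for fixed $k$) upgrades this to
\[
\mathbf{E}[\mathbb{P}(Z_n>0\mid\Pi)^2;\tau_n=k]\sim c_k\,\mathbf{P}(L_n\ge 0),\qquad c_k:=\mathbf{E}[e^{2S_k}\,\ind_{\{\tau_k=k\}}\,h(A_k)].
\]

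Summing over $k=0,\ldots,m$ and then sending $m\to\infty$ while absorbing the remainder into $\varepsilon(m)$, the lemma follows with $\theta:=\sum_{k\ge 0}c_k$. Finiteness of $\theta$ is guaranteed by $h\le 1$ together with the summability of $\sum_k\mathbf{E}[e^{2S_k};\tau_k=k]$ already exploited in the tail estimate, and positivity by $c_0=h(0)=\mathbf{E}^+[Y^2]>0$. The step I expect to be most delicate is the tail estimate: while its form mirrors Lemma \ref{lemhelp}, it relies on \cite{agkv05}[Lemma 2.2] applied to the weight $u(x)=e^{-2x}$, whose integrability against the ascending ladder height structure of $S$ must be re-verified under Assumptions \ref{ass1} and \ref{ass2}.
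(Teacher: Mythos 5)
Your proof is correct, and it reaches the right constant, but it takes a genuinely different route from the paper at one key point: how the square $\mathbb{P}(Z_n>0\mid\Pi)^2$ is handled. The paper never touches the square directly; it writes $\mathbf{E}[\mathbb{P}(Z_n>0\mid\Pi)^2]=\mathbf{E}\big[\mathbb{P}(Z_n>0\mid\Pi)\cdot I_{Z_n>0}\big]$, i.e.\ it reintroduces the branching process through the indicator $I_{Z_n>0}$ so that the quantity becomes a bounded, adapted, $\mathbf{P}^+$-a.s.\ convergent functional of $(\Pi,Z)$ of exactly the type covered by the general Lemma~4.1 of \cite{agkv05}, which is then applied off the shelf (the tail over $\tau_n>m$ is absorbed into that lemma via its Equation (4.10)). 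You instead keep the square and exploit the linear fractional identity $e^{S_k}\mathbb{P}(Z_n>0\mid\Pi)^{-1}=A_k+\mathbb{P}(\tilde Z_{n-k}>0\mid\tilde\Pi)^{-1}$ to separate the $\mathcal{F}_k$-measurable part from the shifted environment, and then re-derive the conclusion of Lemma~4.1 by hand from Lemmas 2.2 and 2.5 of \cite{agkv05}. Both the tail estimate (Lemma 2.2 does apply to $u(x)=e^{-2x}$, which is nonnegative, nonincreasing and integrable, exactly as the paper applies it to $e^{-x}$ in Lemma~\ref{lemhelp}) and the fixed-$k$ limit are sound; your constant $c_k=\mathbf{E}[e^{2S_k}\ind_{\{\tau_k=k\}}h(A_k)]$ agrees with the paper's $\mathbf{E}\big[\mathbf{E}^+\big[(1-f_{0,k}(P^k_\infty))^2\big];\tau_k=k\big]$ because $e^{S_k}\mathbb{P}(Z_\infty>0\mid\Pi)=(A_k+Y^{-1})^{-1}$. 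What the paper's indicator trick buys is independence from the linear fractional structure (that step would survive for general offspring laws, with the model-specific work pushed elsewhere) and a much shorter proof by citation; what your route buys is self-containedness and an explicit formula for $\theta$ in terms of $A_k$ and $Y$. Two cosmetic points: the crude bound should read $\mathbb{P}(Z_n>0\mid\Pi)^2\le e^{2\min(0,L_n)}=e^{2S_{\tau_n}}$ rather than $e^{2L_n}$ (your subsequent display uses the correct version), and the positivity/finiteness of $Y^{-1}$ under $\mathbf{P}^+$ indeed requires Assumption~\ref{ass2} (via the remark reducing it to Assumption B2 of \cite{agkv05}), which you correctly flag even though the lemma's statement mentions only Assumption~\ref{ass1} -- the paper's own proof has the same implicit dependence through \cite{agkv05}[Proposition 3.1].
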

\begin{proof}
Following \cite{agkv05}, we will first define a suitable sequence of uniformly bounded random variables which satisfy the conditions of \cite{agkv05}[Lemma 4.1]. In the second step, we will apply \cite{agkv05}[Lemma 4.1] and get the result of Lemma \ref{lem10} after a short calculation. \\
To avoid any confusion, we adopt the notation from \cite{agkv05}[Lemma 4.1]. Let $I_{Z_n>0}$ be the indicator function of the event $\{Z_n>0\}$. Then for  $n\in\mathbb{N}$ 
\[V_n:=\mathbb{P}(Z_n>0\mid\Pi) \cdot I_{Z_n>0}\quad \text{a.s.},\]
forms a uniformly bounded sequence of 
random variables $(V_n)_{n\in\mathbb{N}}$ adapted to the filtration $(\mathcal{F}_n)_{n\in\mathbb{N}}$. 
Next noting that $Z_n>0$ implies $Z_k>0$ and using (\ref{eq_gen1}), we get random variables adapted to $(\mathcal{F}_k)_{k\in\mathbb{N}}$
\begin{align*}
V_{k,n}& :=\mathbf{E}\big[\mathbb{P}(Z_n>0\mid\Pi) \cdot I_{Z_n>0} ; Z_{k}>0\mid L_{k,n}\geq 0,\mathcal{F}_k\big]\\
&=\mathbf{E}\big[\big(1-f_{0,k}\big(f_{k,n}(0)\big)\big) \cdot I_{Z_n>0}\mid L_{k,n}\geq 0,\mathcal{F}_k\big]\ \text{a.s.} 
\end{align*}
Both  $1-f_{k,n}(0)=\mathbb{P}(Z_n>0\mid\Pi, Z_k=1)$ a.s. and $I_{Z_n>0}$ (given $Z_k$) are bounded, nonnegative and nonincreasing in $n$ 
and thus converge $\mathbf{P}^+-$a.s. More precisely, given $\mathcal{F}_k$,
\[I_{Z_n>0}\stackrel{n\rightarrow\infty}{\longrightarrow} I_{Z_\infty>0}\quad \mathbf{P}^+-\text{a.s.}\]
 where $\mathbb{P}^+$ acts on $\theta_k\circ\Pi$. Moreover, given $\mathcal{F}_k$
\[\mathbb{P}(Z_n>0\mid\Pi) = 1-f_{0,k}\big(f_{k+1,n}(0)\big) \stackrel{n\rightarrow\infty}{\longrightarrow}  1-f_{0,k}\Big(P_\infty^k\Big) 
\quad \mathbf{P}^+-\text{a.s.}\]
where we defined
\begin{align}\label{eq_pk}P^k_\infty:=\mathbb{P}(Z_\infty=0\mid \theta_k\circ\Pi)\quad \text{a.s.}\end{align}
Thus the conditions of Lemma \ref{lem25} are met and as $n\rightarrow\infty$
\begin{align*}
 V_{k,n} \rightarrow V_{\infty}(Z_k,f_{0,k}) \quad \text{a.s.} \ , 
\end{align*}
where for $z\in\mathbb{N}_0$ and $g\in\mathcal{C}_b\big([0,1]\big)$
\begin{align*}
 V_{\infty}(z, g) &= \mathbf{E}^+_z\big[\big(1-g\big(P_\infty\big)\big)\cdot I_{Z_\infty>0}\big]\\
&= \mathbf{E}^+\big[\big(1-g\big(P_\infty\big)\big)\cdot \mathbb{E}_z[I_{Z_\infty>0}\mid\Pi]\big]\\
&=\mathbf{E}^+\big[\big(1-g\big(P_\infty\big)\big) \cdot\mathbb{P}_{z}(Z_\infty>0\mid\Pi)\big]\ .
\end{align*}
Next, recall that by \cite{agkv05}[Lemma 2.1], for fixed $k$ and as $n\rightarrow\infty$, $\mathbf{P}(L_n\geq 0)\sim \mathbf{P}(L_{k,n}\geq 0)$. Consequently, 
\begin{align*}
\mathbf{E}[V_n; Z_k>0,& L_{k,n}\geq 0\mid \mathcal{F}_k] = \mathbf{P}(L_{k,n}\geq 0) \mathbf{E}\big[\mathbb{P}(Z_n>0\mid\Pi) \cdot I_{Z_n>0} ; Z_{k}>0\mid L_{k,n}\geq 0,\mathcal{F}_k\big]\\
&= \mathbf{P}(L_n\geq 0) \big(V_{\infty}(Z_k,f_{0,k})+o(1)\big) \ \text{a.s.}
\end{align*}
and the conditions of  \cite{agkv05}[Lemma 4.1] are met for $m=0$. Thus by \cite{agkv05}[Lemma 4.1], we get that
\begin{align}\label{eq611}
\mathbf{E}[V_n; Z_{\tau_n}>0] = \mathbf{P}(L_n\geq 0) \Big(\sum_{k=0}^\infty \mathbf{E}[V_{\infty}(Z_k,f_{0,k}); \tau_k=k] +o(1)\Big) \ . 
\end{align}
Noting that $\{Z_{n}>0\}$ implies $\{Z_{\tau_n}>0\}$ and conditioning on the environment yields
\begin{align*}
\mathbf{E}[V_n; Z_{\tau_n}>0]&=\mathbf{E}\big[\mathbb{P}(Z_n>0\mid \Pi) \cdot I_{Z_n>0}; Z_{\tau_n}>0 \big]\\
&=\mathbf{E}\big[\mathbb{P}(Z_n>0\mid \Pi)\cdot I_{Z_n>0} \big] = \mathbf{E}\big[\mathbb{P}(Z_n>0\mid \Pi)^2 \big]  \ .
\end{align*}
Inserting this into  (\ref{eq611}), we get that
\[ \mathbf{E}\big[\mathbb{P}(Z_n>0\mid \Pi)^2 \big]=\mathbf{E}[V_n; Z_{\tau_n}>0]= \big(\theta +o(1)\big) \mathbf{P}(L_n\geq 0) ,\]
where 
\begin{align}\label{theta1}
\theta :=\sum_{k=0}^\infty \mathbf{E}[V_{\infty}(Z_k,f_{0,k}); \tau_k=k]\ .  
\end{align}
Clearly, $V_{\infty}(Z_k,f_{0,k})\leq \mathbf{P}^+_{Z_k}(Z_\infty>0)$, and thus by \cite{agkv05}[Equation (4.10)], the sum on the 
right-hand side is convergent.  As it is proved in \cite{agkv05}[Proposition 3.1], $P_\infty=\mathbb{P}_z(Z_\infty=0\mid\Pi)<1$ $\mathbf{P}^+$-a.s. 
for all $z\geq 1$. As for $s<1$ and $f_{0,k}(0)<1$ 
\[f_{0,k}(s) < f_{0,k}(1)= 1\]
this proves $\theta >0$. 
\end{proof}

$\qquad$\textsl{Proof of Theorem \ref{theo1}.} Using the change of measure and the explicit formula for $\mathbb{P}(Z_n=1\mid\Pi)$ in the case of linear fractional offspring distributions, we get that
\begin{align}
\mathbb{P}(Z_n=1)= \mathbb{E}\big[e^{-S_n} \mathbb{P}(Z_n>0\mid\Pi)^2\big] = \gamma^n \ \mathbf{E}\big[ \mathbb{P}(Z_n>0\mid\Pi)^2\big] \ .
\end{align}
The theorem now results from Lemma \ref{lem10}. \qed\medskip\\
There is another representation of $\theta$. Using $\mathbb{P}_z(Z_\infty>0\mid \Pi)=1-\mathbb{P}_1(Z_\infty=0\mid\Pi)^z$ yields
\begin{align*}
V_{\infty}(z,g) &=\mathbf{E}^+\big[\big(1-g\big(P_\infty\big)\big) \mathbb{P}_{z}(Z_\infty>0\mid\Pi)\big]\\
&= \mathbf{E}^+\big[\big(1-g\big(P_\infty\big)\big) \big(1-(P_\infty)^z\big)\big]\ .
\end{align*}
Taking into account the definition of generating functions and applying Fubini's theorem to interchange the expectations 
(note that $\mathbf{E}^+$ only acts on the shifted environment $\theta_k\circ\Pi$, i.e. $P^k_\infty$ whereas $\mathbf{E}$ only acts on $Z_k$ and $f_{0,k}$), we get the following 
representation of $\theta$.
\begin{align}
\mathbf{E}&\big[V_{\infty}(Z_k,f_{0,k}); \tau_k=k \big]= \mathbf{E}\Big[\mathbf{E}^+\Big[\big(1-f_{0,k}(P^k_\infty)\big)
\mathbf{E}^+\big[\big(1-(P^k_\infty)^{Z_k}\big)  \mid \theta_k\circ\Pi\big]\Big];\tau_k=k\Big]\nonumber\\
&=  \mathbf{E}\Big[\mathbf{E}^+\Big[\big(1-f_{0,k}(P^k_\infty)\big)\big(1-f_{0,k}(P^k_\infty)\big)\Big] ;\tau_k=k\Big]\nonumber\\
&= \mathbf{E}\big[\mathbf{E}^+\big[(1-f_{0,k}(P^k_\infty))^2\big]; \tau_k=k  \big]\ . \label{reptheta2}
\end{align}

\subsection{Proof of Theorem \ref{theo2}}
\begin{lemma}\label{lem12}
For every $\varepsilon>0$ there is an $m=m(\varepsilon)\in\mathbb{N}$ such that
\[\lim_{n\rightarrow\infty} \mathbf{P}(L_n\geq 0)^{-1} \mathbf{E}\big[\big|\mathbb{P}(Z_n>0\mid\Pi)^2-\mathbb{P}(Z_{\tau_n+m}>0\mid\Pi)^2\big|\big] <\varepsilon \ .\]
\end{lemma}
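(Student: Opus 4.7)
The plan is to decompose the expectation based on the location of the global minimum $\tau_n$. First, by the standard estimate \eqref{gleichung1}, $\mathbb{P}(Z_n>0|\Pi)\le \exp(\min_{0\le k\le n} S_k)$, and by the monotonicity of $k\mapsto \mathbb{P}(Z_k>0|\Pi)$ together with the fact that $\tau_n$ realises this minimum, one likewise gets $\mathbb{P}(Z_{\tau_n+m}>0|\Pi)\le \exp(\min_{0\le k\le n} S_k)$ (the case $\tau_n+m>n$ reduces trivially to $\mathbb{P}(Z_{\tau_n+m}>0|\Pi)\le \mathbb{P}(Z_n>0|\Pi)$). Hence on $\{\tau_n>m\}$,
\[|\mathbb{P}(Z_n>0|\Pi)^2-\mathbb{P}(Z_{\tau_n+m}>0|\Pi)^2|\,\le\,2\,\exp\!\big(2\min_{0\le k\le n} S_k\big),\]
and running the very same change-of-measure argument together with \cite{agkv05}[Lemma 2.2] as in the proof of Lemma~\ref{lemhelp} gives $\mathbf{E}[\exp(2\min_{0\le k\le n} S_k);\tau_n>m]<(\varepsilon/4)\mathbf{P}(L_n\ge 0)$ once $m$ is large enough.

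For $\{\tau_n\le m\}$, on which $\tau_n+m\le n$ as soon as $n\ge 2m$, monotonicity gives $\mathbb{P}(Z_{\tau_n+m}>0|\Pi)\ge\mathbb{P}(Z_n>0|\Pi)$, so the absolute value drops. I would then apply Lemma~\ref{lem25} to the uniformly bounded, $(\mathcal{F}_n)$-adapted sequence
\[U_n\ :=\ \big(\mathbb{P}(Z_{\tau_n+m}>0|\Pi)^2-\mathbb{P}(Z_n>0|\Pi)^2\big)\mathbf{1}_{\{\tau_n\le m\}}.\]
Under $\mathbf{P}^+$ the walk stays non-negative, forcing $L_n\equiv 0$ and $\tau_n\equiv 0$; hence $\mathbb{P}(Z_{\tau_n+m}>0|\Pi)=\mathbb{P}(Z_m>0|\Pi)$ is independent of $n$, and $\mathbb{P}(Z_n>0|\Pi)\to\mathbb{P}(Z_\infty>0|\Pi)$ $\mathbf{P}^+$-a.s.\ (the same convergence already used in the proof of Lemma~\ref{lem10}). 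Therefore
\[U_n\ \longrightarrow\ \mathbb{P}(Z_m>0|\Pi)^2-\mathbb{P}(Z_\infty>0|\Pi)^2\quad \mathbf{P}^+\text{-a.s.},\]
and Lemma~\ref{lem25} yields $\lim_n\mathbf{E}[U_n\mid L_n\ge 0]=\mathbf{E}^+[\mathbb{P}(Z_m>0|\Pi)^2-\mathbb{P}(Z_\infty>0|\Pi)^2]$, which decreases to $0$ as $m\to\infty$ by bounded convergence, since $\mathbb{P}(Z_m>0|\Pi)\downarrow \mathbb{P}(Z_\infty>0|\Pi)$ pointwise.

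Since Lemma~\ref{lem25} controls only the $\{L_n\ge 0\}=\{\tau_n=0\}$ part of $\mathbf{E}[U_n]$, what remains is the contribution of $\{1\le \tau_n\le m\}$. I would split it by $\tau_n=k$ for $k=1,\ldots,m$, use \eqref{eq_tn} to write $\{\tau_n=k\}=\{\tau_k=k,\,L_{k,n}\ge 0\}$, condition on $\mathcal{F}_k$ via the Markov property, and then apply Lemma~\ref{lem25} to the shifted walk $(S_{k+j}-S_k)_{j\ge 0}$ (which starts at $0$) together with its own $h$-transform. For each fixed $k$, the resulting limit takes the form $\mathbf{E}\big[\widetilde{\mathbf{E}}^+[\mathbb{P}(Z_{k+m}>0|\Pi)^2-(s^{(k)}_\infty)^2];\,\tau_k=k\big]$, which vanishes as $m\to\infty$ by the same bounded-convergence argument; the $k$-sum is controlled by the domination $(s^{(k)}_\infty)^2\le e^{2S_k}$ on $\{\tau_k=k\}$ together with the summability of $\sum_k\mathbf{E}[e^{2S_k};\tau_k=k]$ obtained from \cite{agkv05}[Lemma 2.2], exactly the summability already exploited in the proof of Lemma~\ref{lemhelp}. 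The main technical obstacle is precisely making this shifted application of Lemma~\ref{lem25} rigorous and uniform enough in $k$ to allow a dominated-convergence passage in the $k$-sum.
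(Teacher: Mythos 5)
Your proposal is correct and follows essentially the same route as the paper: decompose according to $\tau_n$, kill the tail where $\tau_n$ exceeds a cutoff by the estimate behind Lemma \ref{lemhelp} (i.e.\ \cite{agkv05}[Lemma 2.2] applied to $\sum_k\mathbf{E}[e^{S_k};\tau_k=k]\mathbf{P}(L_{n-k}\geq 0)$), and for each small value $k$ of $\tau_n$ write $\{\tau_n=k\}=\{\tau_k=k,\,L_{k,n}\geq 0\}$, condition on $\mathcal{F}_k$, apply Lemma \ref{lem25} to the shifted environment, and let the offset tend to infinity at the end. The one place you make life harder than necessary is using the same $m$ both as the cutoff for $\tau_n$ and as the offset in $Z_{\tau_n+m}$: this is what forces the dominated-convergence passage in the $k$-sum that you flag as the main technical obstacle. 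The paper decouples the two parameters: a cutoff $l=l(\varepsilon)$ is fixed first so that the tail $\{\tau_n>l\}$ contributes less than $\tfrac{\varepsilon}{2}\mathbf{P}(L_n\geq 0)$ uniformly in $m$ (possible because on $\{\tau_n=k\}$ the difference of squares is bounded by $\mathbb{P}(Z_k>0\mid\Pi)^2$, independently of $m$), and only afterwards is $m$ sent to infinity over the now finite, fixed sum $k=0,\dots,l$, so no uniformity in $k$ is needed. Your fallback --- domination of each limiting term by $e^{2S_k}$ on $\{\tau_k=k\}$ together with the summability supplied by \cite{agkv05}[Lemma 2.2] --- would also close that gap, but the decoupling of $l$ and $m$ is the cleaner way to organize the argument.
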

\begin{proof}
 We decompose according to $\tau_n$ and let $0\leq l\leq n$. Then
\begin{align}
\mathbf{E}\big[\big|&\mathbb{P}(Z_n>0\mid\Pi)^2-\mathbb{P}(Z_{\tau_n+m}>0\mid\Pi)^2\big|\big]\leq \sum_{k=0}^n \mathbf{E}\big[\big|\mathbb{P}(Z_n>0\mid\Pi)^2-\mathbb{P}(Z_{\tau_n+m}>0\mid\Pi)^2\big|;\tau_n=k\big]  \nonumber\\
&= \sum_{k=0}^{l } \mathbf{E}\big[\big|\mathbb{P}(Z_n>0\mid\Pi)^2-\mathbb{P}(Z_{k+m}>0\mid\Pi)^2\big|;\tau_n=k\big] \nonumber \\
&\quad +  \sum_{k=l+1}^n \mathbf{E}\big[\big|\mathbb{P}(Z_n>0\mid\Pi)^2-\mathbb{P}(Z_{k+m}>0\mid\Pi)^2\big|;\tau_n=k\big] \ .\label{eq11}
\end{align}
As to the second term, using the fact that $\mathbb{P}(Z_n>0\mid\Pi)$ is a.s. decreasing in $n$,  we get that 
\begin{align*}
\sum_{k=l+1}^n &\mathbf{E}\big[\big|\mathbb{P}(Z_n>0\mid\Pi)^2-\mathbb{P}(Z_{k+m}>0\mid\Pi)^2\big|;\tau_n=k\big]\\
&\leq  \sum_{k=l+1}^n \mathbf{E}\big[\mathbb{P}(Z_k>0\mid\Pi)^2;\tau_n=k\big] \ .
\end{align*}
Using (\ref{lem11}), if $l$ is chosen large enough, 
\begin{align}
\sum_{k=l+1}^n \mathbf{E}\big[\mathbb{P}(Z_k>0\mid\Pi)^2;\tau_n=k\big] \leq  \frac{\varepsilon}{2} \ \mathbf{P}(L_n\geq 0) \ .
\end{align}
For the first term in (\ref{eq11}), we condition on the environment up to generation $k$ and get that
\begin{align}
\sum_{k=0}^{l } &\mathbf{E}[\big|\mathbb{P}(Z_n>0\mid\Pi)^2-\mathbb{P}(Z_{\tau_n+m}>0\mid\Pi)^2\big|;\tau_n=k] \\
&\leq \sum_{k=0}^{l } \mathbf{E}\big[\big|\mathbb{P}(Z_n>0\mid\Pi)^2-\mathbb{P}(Z_{\tau_n+m}>0\mid\Pi)^2\big|;\tau_k=k, L_{k,n}\geq 0\big] \nonumber\\
&= \sum_{k=0}^{l }\mathbf{E}\big[\mathbf{E}\big[\big|\mathbb{P}(Z_n>0\mid\Pi)^2-\mathbb{P}(Z_{k+m}>0\mid\Pi)^2\big|; L_{k,n}\geq 0 \mid \mathcal{F}_k\big];\tau_k=k\big]\nonumber\\
&= \sum_{k=0}^{l} \mathbf{E}[\psi(f_{0,k},m,n-k) ;\tau_k=k] \mathbf{P}(L_{n-k}\geq 0) \ , \label{eq0705}
\end{align}
where for $g\in\mathcal{C}_b\big([0,1]\big)$, the space of bounded and continuous functions on $[0,1]$, we define 
\[\psi(g,m,n):= \mathbf{E}\big[\big|\big(1-g\big(f_{0,n}(0)\big)\big)^2-\big(1-g\big(f_{0,m}(0)\big)\big)^2\big| \mid L_n\geq 0\big] \ .\]
Note that in (\ref{eq0705}), the expectation in $\psi$ acts on the shifted environment $\theta_k\circ\Pi$. \\
As $n\rightarrow\infty$, $f_{0,n}(0)\rightarrow \mathbb{P}(Z_\infty=0\mid \Pi)$ a.s. Using Lemma \ref{lem25} then yields 
\[\lim_{n\rightarrow\infty}\psi(g,m,n)=\mathbf{E}^+\big[\big|\big(1-g\big(P_\infty\big)\big)^2- \big(1-g\big(f_{0,m}(0)\big)\big)^2\big|\big]  \ . \]
As $m\rightarrow\infty$, $f_{0,m}(0)\rightarrow P_\infty$ a.s. and thus by the dominated convergence theorem, the above term tends to zero as $m\rightarrow\infty$.
Applying this and $\mathbf{P}(L_{n-k}\geq 0)\sim\mathbf{P}(L_n\geq 0)$, we get that
\begin{align}
\sum_{k=0}^{l} &\big|\mathbf{E}[\mathbb{P}(Z_n>0\mid\Pi)^2-\mathbb{P}(Z_{\tau_n+m}>0\mid\Pi)^2;\tau_n=k] \big| \nonumber\\
&\leq \sum_{k=0}^{l } \mathbf{E}[\psi(f_{0,k},m,n-k); \tau_k=k]\ \mathbf{P}(L_{n}\geq 0) \leq \frac{\varepsilon}{2} \mathbf{P}(L_n\geq 0)\label{eq10}
\end{align}
if $m$ is large enough. This completes the proof.
\end{proof}
$\qquad$\textsl{Proof of Theorem \ref{theo2}.} The proof follows the same lines as the proof of \cite{agkv05}[Theorem 1.6]. 
Let $\phi$ be a bounded and continuous function on $D[0,1]$, the space of c\`adl\`ag functions on $[0,1]$. Recall that 
\[ S^n=\big(S^{n}_t\big)_{0\leq t\leq 1} = \tfrac{l(n)}{ n^{1/\alpha}} \big(S_{\lfloor nt\rfloor}\big)_{0\leq t\leq 1}\ .\] 
Then considering the change of measure to $\mathbf{P}$, (\ref{surv1}) 
and Theorem \ref{theo1}, as $n\rightarrow\infty$
\begin{align*}
\mathbb{E}[\phi(S^n)\mid Z_n=1]&=\frac{ \mathbf{E}\big[\phi(S^{n})\mathbb{P}(Z_n>0\mid\Pi)^2\big]}{\mathbf{E}\big[\mathbb{P}(Z_n>0\mid\Pi)^2\big]}\sim \frac 1\theta \frac{ \mathbf{E}\big[\phi(S^n)\mathbb{P}(Z_n>0\mid\Pi)^2\big]}{\mathbf{P}(L_n\geq 0)} \ .
\end{align*}
Thus it is enough to prove that as $n\rightarrow\infty$
\[\Big|\mathbf{E}\big[\phi(S^n)\mathbb{P}(Z_n>0\mid\Pi)^2\big]-\mathbf{E}\big[\phi(L^+)\big]\mathbf{E}\big[\mathbb{P}(Z_n>0\mid\Pi)^2\big]\Big|= o\big(\mathbf{P}(L_n\geq 0)\big) \ .\]
By the triangle inequality and Lemma \ref{lem12}, for every $\varepsilon>0$, if $m\in\mathbb{N}$ is large enough
\begin{align*}
\Big|\mathbf{E}&\big[\phi(S^n)\mathbb{P}(Z_n>0\mid\Pi)^2\big]-\mathbf{E}\big[\phi(L^+)\big]\mathbf{E}\big[\mathbb{P}(Z_n>0\mid\Pi)^2\big]\Big|\\
&\leq \Big|\mathbf{E}\big[\phi(S^n)\mathbb{P}(Z_{\tau_n+m}>0\mid\Pi)^2\big]-\mathbf{E}\big[\phi(L^+)\big]\mathbf{E}\big[\mathbb{P}(Z_{n}>0\mid\Pi)^2\big]\Big| \\
&\quad + \sup|\phi| \mathbf{E}\big[\big|\mathbb{P}(Z_n>0\mid\Pi)^2\big]-\mathbb{P}(Z_{\tau_n+m}>0\mid\Pi)^2\big|\big]\\
&=  \Big|\mathbf{E}\big[\phi(S^n)\mathbb{P}(Z_{\tau_n+m}>0\mid\Pi)^2\big]-\mathbf{E}\big[\phi(L^+)\big]\mathbf{E}\big[\mathbb{P}(Z_{n}>0\mid\Pi)^2\big]\Big| +\varepsilon \mathbf{P}(L_n\geq 0) \ .
\end{align*}
Next, we are going to prove that the first term can be bounded by $\varepsilon\mathbf{P}(L_n\geq 0)$ if $m$ is large enough. 
We decompose according to the time of the minimum. By (\ref{lem11}), for every $\varepsilon>0$ and for $l\in\mathbb{N}$ large enough, and as $\mathbb{P}(Z_n>0\mid\Pi)$ is 
non-increasing in $n$,
\begin{align*}
\Big|\mathbf{E}&\big[\phi(S^n)\mathbb{P}(Z_{\tau_n+m}>0\mid\Pi)^2\big]-\mathbf{E}\big[\phi(L^+)\big]\mathbf{E}\big[\mathbb{P}(Z_{n}>0\mid\Pi)^2\big]\Big|\\
&\leq\Big| \sum_{k=0}^l \Big(\mathbf{E}\big[\phi(S^n)\mathbb{P}(Z_{k+m}>0\mid\Pi)^2;\tau_n=k\big]-\mathbf{E}\big[\phi(L^+)\big]\mathbf{E}\big[\mathbb{P}(Z_n>0\mid\Pi)^2;\tau_n=k\big] \Big)\Big|\\
&\qquad +2\sup|\phi| \mathbf{E}\big[\mathbb{P}(Z_{k+m}>0\mid\Pi)^2; \tau_n>l\big] \\\
&\leq \Big| \sum_{k=0}^l \Big(\mathbf{E}\big[\phi(S^n)\mathbb{P}(Z_{k+m}>0\mid\Pi)^2;\tau_n=k\big]-\mathbf{E}\big[\phi(L^+)\big]\mathbf{E}\big[\mathbb{P}(Z_n>0\mid\Pi)^2;\tau_n=k\big] \Big)\Big|\\
&\quad +\varepsilon \mathbf{P}(L_n\geq 0) \ .
\end{align*}
Next we decompose the process $S^n$ according to generation $k$, i.e. let for $0\leq k\leq n$
\[S_t^{k,n} = n^{-1/\alpha} l(n) \ S_{\lfloor nt\rfloor\wedge k}  \]
and 
\[\overline{S}_t^{k,n} = n^{-1/\alpha} l(n) \ (S_{\lfloor nt\rfloor}-S_{\lfloor nt\rfloor\wedge k}) \ .  \]
Thus 
\[S^n= S^{k,n}+\overline{S}^{k,n} \ .\]
Recall the definition of $L_{k,n}=\min_{0\leq j\leq n-k} (S_{k+j}-S_k)$ from (\ref{min1}). Next, note that 
\begin{align}
  \sum_{k=0}^l  &\mathbf{E}\big[\phi(S^{k+m,n}+\overline{S}^{k+m,n}) \mathbb{P}(Z_{k+m}>0\mid\Pi)^2; \tau_n=k\big] \nonumber \\
&=  \sum_{k=0}^l  \mathbf{E}\Big[\mathbf{E}\big[\phi(S^{k+m,n}+\overline{S}^{k+m,n}) ; L_{k,n}\geq 0\mid \mathcal{F}_{k+m}\big] \mathbb{P}(Z_{k+m}>0\mid\Pi)^2; \tau_k=k\Big]\ .\label{eq1}
\end{align}
Conditioning on the environment yields
\begin{align}
\mathbf{E}&\Big[\mathbf{E}\big[\phi(S^{k+m,n}+\overline{S}^{k+m,n}) ; L_{k,n}\geq 0\mid \mathcal{F}_{k+m}\big] \mathbb{P}(Z_{k+m}>0|\Pi)\cdot I_{Z_{k+m}>0}; \tau_k=k\Big]\nonumber\\
&= \mathbf{E}\Big[\mathbf{E}\big[\phi(S^{k+m,n}+\overline{S}^{k+m,n}) ; L_{k,n}\geq 0\mid \mathcal{F}_{k+m}\big] \mathbb{P}(Z_{k+m}>0|\Pi)^2; \tau_k=k\Big]\ .\label{eq12}
\end{align}
Set for $w\in D[0,1]$ and $x\geq 0$
\[\psi(w,x):= \mathbf{E}\big[\phi(w+\overline{S}^{k+m,n});L_{k+m,n}\geq -x\big]\ .\]
Note that (see \cite{agkv05}[Proof of Theorem 1.5])
\[\{L_{k,n}\geq 0\}= \{L_{k,k+m}\geq 0\}\cap\{L_{k+m,n}\geq -(S_{k+m}-S_k)\}\ .\]
Thus, we may rewrite
\begin{align*}
\mathbf{E}&\big[\phi(S^{k+m,n}+\overline{S}^{k+m,n}); L_{k,n}\geq 0\mid \mathcal{F}_{k+m}\big]\\
&=\mathbf{E}\big[\phi(S^{k+m,n}+\overline{S}^{k+m,n}); L_{k,k+m}\geq 0,L_{k+m,n}\geq -(S_{k+m}-S_k)\mid \mathcal{F}_{k+m}\big] \\
&=\psi(S^{k+m,n}, S_{k+m}-S_k) \ I_{L_{k,k+m}\geq 0}\quad \text{a.s.}
\end{align*}
Using \cite{agkv05}[Lemma 2.3] and \cite{agkv05}[Lemma 2.1] yields (where the expectation in $\psi$ is taken with respect to the shifted environment $\theta_{k+m}\circ\Pi$)
\begin{align}
\psi(w,x) &= \mathbf{E}\big[\phi(w+\overline{S}^{k+m,n});L_{k+m,n}\geq -x\big]\nonumber\\
& = \mathbf{P}(L_{k+m,n}\geq -x) \big( \mathbf{E}\big[\phi(w+L^+)\big]+o(1)\big) \nonumber\\
&= u(x) \ \mathbf{P}(L_{n}\geq 0) \big( \mathbf{E}\big[\phi(w+L^+)\big]+o(1)\big) \ .\label{eq31}
\end{align}
Also note that for fixed $k$ and $m$, $S^{k+m,n}$ converges uniformly to 0 as $n\rightarrow\infty$ a.s. and that $\phi$ is continuous and bounded. Using this and (\ref{eq31}), 
we get that a.s.
\begin{align*}
 \mathbf{E}&\big[\phi(S^{k+m,n}+\overline{S}^{k+m,n}); L_{k,n}\geq 0\mid\mathcal{F}_{k+m}\big] = u(S_{k+m}-S_k) \mathbf{P}(L_n\geq 0)I_{L_{k,k+m}\geq 0} \big(\mathbf{E}[\phi(L^+)]+o(1)\big) \ .
\end{align*}
Inserting this into (\ref{eq1}) and using (\ref{eq12}) yields
\begin{align*}
 \sum_{k=0}^l  &\mathbf{E}\big[\phi(S^{k,n}+\overline{S}^{k+m,n}) \mathbb{P}(Z_{k+m}>0\mid\Pi)^2; \tau_n=k\big] \nonumber \\
&=  \big( \mathbf{E}\big[\phi(L^+)\big]+o(1)\big) \mathbf{P}(L_n\geq 0)\\
&\qquad \sum_{k=0}^l \mathbf{E}\Big[ u(S_{k+m}-S_k) \mathbb{P}(Z_{k+m}>0\mid\Pi)\cdot I_{Z_{k+m}>0}; \tau_k=k,L_{k,k+m}\geq 0\Big]\ .
\end{align*}
Finally, by the definition of $\mathbf{P}^+$, we have (recall $u(0)=1$)
\begin{align*}
\mathbf{E}&\Big[ u(S_{k+m}-S_k) \mathbb{P}(Z_{k+m}>0\mid\Pi)\cdot I_{Z_{k+m}>0}; \tau_k=k,L_{k,k+m}\geq 0\Big]\\
&= \mathbf{E}\Big[\mathbf{E}^+\Big[\mathbb{P}(Z_{k+m}>0\mid\Pi)\cdot I_{Z_{k+m}>0}\mid\mathcal{F}_k\Big];\tau_k=k\Big]\\
&\stackrel{m\rightarrow\infty}{\longrightarrow} \mathbf{E}\big[ V_{\infty} (Z_k, f_{0,k}); \tau_k=k\big] \ ,
\end{align*}
where $V_{\infty} (Z_k, f_{0,k})$ is defined in Lemma \ref{lem10}. Inserting all this yields
\begin{align*}
 \sum_{k=0}^l \mathbf{E}&\big[\phi(S^n) \mathbb{P}(Z_{k+m}>0\mid\Pi)^2;\tau_n=k\big] \\
&=
\big(\mathbf{E}\big[\phi(L^+)\big]+o(1)\big) \mathbf{P}(L_n\geq 0) \sum_{k=0}^l \Big(\mathbf{E}\big[ V_{\infty} (Z_k, f_{0,k}); \tau_k=k\big]+o(1)\Big)
\end{align*}
On the other hand, by Lemmas \ref{lem10} and \ref{lem12},
\begin{align*}
 \mathbf{E}\big[\mathbb{P}(Z_n>0\mid\Pi)^2;\tau_n=k\big]= \mathbf{P}(L_n\geq 0) \sum_{k=0}^l \Big(\mathbf{E}\big[ V_{\infty} (Z_k, f_{0,k}); \tau_k=k\big]+o(1)\Big) \ .
\end{align*}
Thus for every $\varepsilon>0$ if $l$ and $m$ are large enough,
\begin{align*}
 \Big| \sum_{k=0}^l \mathbf{E}\big[\phi(S^n)\mathbb{P}(Z_{k+m}>0\mid\Pi)^2;\tau_n=k\big]-\mathbf{E}\big[\phi(L^+)\big]\mathbf{E}\big[\mathbb{P}(Z_n>0\mid\Pi)^2;\tau_n=k\big] \Big|\leq \varepsilon \mathbf{P}(L_n\geq 0) \ .
\end{align*}
This proves the theorem.\qed \medskip\\

\subsection{Proof of Theorem \ref{theo23} and \ref{theo3}}
The following lemma describes the probability that the process has some value, conditioned on a favorable environment.
\begin{lemma}\label{le_35}
For every $z,k\in\mathbb{N}$,
\[\lim_{n\rightarrow\infty} \mathbf{E}\big[e^{S_n} \mathbb{P}_z(Z_n=k\mid\Pi) \mid L_n\geq 0\big]=z\  
\mathbf{E}^+\big[\mathbb{P}(Z_\infty>0\mid \Pi)^2\mathbb{P}(Z_\infty=0\mid \Pi)^{z-1}\big] \ ,\]
where the limit does not depend on $k$. 
\end{lemma}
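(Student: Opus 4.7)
The plan is to obtain a closed-form expression for $e^{S_n}\,\mathbb{P}_z(Z_n=k\mid\Pi)$ in terms of $p_n := \mathbb{P}(Z_n>0\mid\Pi)$ and $H_n$, verify it is uniformly bounded, identify its $\mathbf{P}^+$-a.s.\ limit, and then apply Lemma~\ref{lem25}.

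First, by the branching property applied under $\mathbb{P}(\cdot\mid\Pi)$, the process started from $z$ ancestors is a sum of $z$ conditionally i.i.d.\ one-ancestor processes. Formula~(\ref{geo2}) implies that, conditionally on $\Pi$ and on $\{Z_n>0\}$, the one-ancestor $Z_n$ is shifted geometric on $\mathbb{N}$ with success parameter $1-H_n$. Summing over the number $j$ of surviving subtrees and using that a sum of $j$ i.i.d.\ shifted geometrics is negative binomial, I would obtain
\begin{equation*}
\mathbb{P}_z(Z_n=k\mid\Pi) = \sum_{j=1}^{\min(z,k)} \binom{z}{j}\binom{k-1}{j-1}\, p_n^{\,j}(1-p_n)^{z-j}\,(1-H_n)^{j}H_n^{k-j}.
\end{equation*}
From the definition~(\ref{eqdefh}) of $H_n$ one reads off the key identity $e^{S_n}(1-H_n)=p_n$. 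Multiplying the display above by $e^{S_n}$ and absorbing one factor of $1-H_n$ yields
\begin{equation*}
U_n \;:=\; e^{S_n}\,\mathbb{P}_z(Z_n=k\mid\Pi) \;=\; \sum_{j=1}^{\min(z,k)} \binom{z}{j}\binom{k-1}{j-1}\, p_n^{\,j+1}(1-p_n)^{z-j}\,(1-H_n)^{j-1}H_n^{k-j}.
\end{equation*}

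Each factor on the right lies in $[0,1]$ and the sum has at most $\min(z,k)$ terms, so $U_n$ is bounded by a constant depending only on $z$ and $k$, and it is clearly $\mathcal{F}_n$-adapted. Under $\mathbf{P}^+$ the associated random walk satisfies $S_n\to\infty$ almost surely, so by the remark following~(\ref{eqdefh}) we have $H_n\to 1$ and hence $(1-H_n)^{j-1}\to 0$ for every $j\geq 2$. Meanwhile, $p_n\downarrow p_\infty:=\mathbb{P}(Z_\infty>0\mid\Pi)$ $\mathbf{P}^+$-a.s., and $H_n^{k-j}\to 1$ for each fixed $j\leq k$. Only the $j=1$ summand survives, giving
\begin{equation*}
U_n \;\longrightarrow\; z\,p_\infty^{\,2}\,(1-p_\infty)^{z-1} \;=\; z\,\mathbb{P}(Z_\infty>0\mid\Pi)^{2}\,\mathbb{P}(Z_\infty=0\mid\Pi)^{z-1} \qquad \mathbf{P}^+\text{-a.s.},
\end{equation*}
and the limit is manifestly independent of $k$.

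Lemma~\ref{lem25} then converts this $\mathbf{P}^+$-a.s.\ convergence of the uniformly bounded adapted sequence $(U_n)$ into convergence of $\mathbf{E}[U_n\mid L_n\geq 0]$ to $\mathbf{E}^+[U_\infty]$, which is exactly the asserted identity. The only subtle step is the convolution bookkeeping that yields the explicit formula for $\mathbb{P}_z(Z_n=k\mid\Pi)$; once that is in hand, the algebraic identity $e^{S_n}(1-H_n)=p_n$ both preserves a $p_n^{\,2}$ in the $j=1$ contribution and sends all $j\geq 2$ contributions to $0$, automatically producing the $k$-independence of the limit.
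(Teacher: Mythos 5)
Your proof is correct, and it reaches the conclusion by a genuinely more direct route than the paper. The paper proves the lemma by induction on $z$: it establishes the case $z=1$ from (\ref{geo2}) and Lemma \ref{lem25}, and then in the induction step writes $\mathbb{P}_{z+1}(Z_n=k\mid\Pi)$ as the convolution $\sum_{j=0}^{k}\mathbb{P}(Z_n=j\mid\Pi)\,\mathbb{P}_z(Z_n=k-j\mid\Pi)$, showing that the middle terms vanish $\mathbf{P}^+$-a.s.\ because $\mathbb{P}(Z_n=j\mid\Pi)\to 0$, and that the two boundary terms contribute $z$ and $1$ times $\mathbb{P}(Z_\infty>0\mid\Pi)^2\mathbb{P}(Z_\infty=0\mid\Pi)^{z}$ respectively. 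You instead resolve the entire convolution in closed form: the identity $e^{S_n}(1-H_n)=\mathbb{P}(Z_n>0\mid\Pi)$ (an immediate consequence of (\ref{eqdefh}) and the formula for $\mathbb{P}(Z_n>0\mid\Pi)$, which the paper uses implicitly but never isolates) shows that the one-ancestor law conditioned on survival is geometric with parameter $1-H_n$, whence the negative-binomial expression for $\mathbb{P}_z(Z_n=k\mid\Pi)$, and after multiplying by $e^{S_n}$ every summand with $j\ge 2$ carries a factor $(1-H_n)^{j-1}\to 0$ under $\mathbf{P}^+$. Both arguments ultimately rest on the same three ingredients --- the linear fractional formula (\ref{geo2}), the fact that $S_n\to\infty$ and hence $H_n\to 1$ $\mathbf{P}^+$-a.s., and Lemma \ref{lem25} --- so neither is more general; what your version buys is a single uniform computation in place of the induction, an explicit finite formula for $\mathbb{P}_z(Z_n=k\mid\Pi)$ that makes the uniform boundedness and the $k$-independence of the limit transparent, and a cleaner identification of exactly which term survives (one surviving subtree absorbing all $k$ individuals, the other $z-1$ dying out). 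The only point worth stating explicitly in a final write-up is that $\mathbb{P}(Z_n>0\mid\Pi)$ decreases to $\mathbb{P}(Z_\infty>0\mid\Pi)$ a.s., which justifies the convergence of the $p_n$ factors under $\mathbf{P}^+$.
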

\begin{proof}
We will prove the lemma by induction with respect to $z$.\\
 For $z=1$,  the explicit formula for the probability (\ref{geo2}) yields 
\begin{align*}
 \lim_{n\rightarrow\infty} &\mathbf{E}\big[e^{S_n} \mathbb{P}(Z_n=k\mid\Pi) \mid L_n\geq 0\big] = \lim_{n\rightarrow\infty} \mathbf{E}\big[\mathbb{P}(Z_n>0\mid \Pi)^2 H_n^{k-1} \mid L_n\geq 0\big]\ .
\end{align*}
Recall that 
\[H_n = \frac{\sum_{k=0}^{n-1} \eta_{k+1} e^{-S_k}}{e^{-S_n} +\sum_{k=0}^{n-1} \eta_{k+1} e^{-S_k}}\ .\]
Under $\mathbf{P}^+$, $S_n\rightarrow\infty$ a.s. and thus $e^{-S_n}\rightarrow 0$ a.s. Consequently $H_n\rightarrow 1$ $\mathbf{P}^+$-a.s. and as $n\rightarrow\infty$
\[e^{S_n}\mathbb{P}(Z_n=k\mid\Pi) \rightarrow \mathbb{P}(Z_\infty>0\mid\Pi)^2 \quad \mathbf{P}^+-\text{a.s.} \]
Using this together with Lemma \ref{lem25} and $H_n\leq 1$ yields for every $k\in\mathbb{N}$
\begin{align*}
 \lim_{n\rightarrow\infty} &\mathbf{E}\big[e^{S_n} \mathbb{P}(Z_n=k\mid\Pi) \mid L_n\geq 0\big]= \mathbf{E}^+ \big[\mathbb{P}(Z_\infty>0\mid \Pi)^2 \big]\ ,
\end{align*}
which completes the proof for $z=1$.\\
Let us now assume that for $z\in\mathbb{N}$
\[e^{S_n}\mathbb{P}_z(Z_n=k\mid\Pi) \rightarrow z\ \mathbb{P}(Z_\infty>0\mid\Pi)^2 \mathbb{P}(Z_\infty=0\mid \Pi)^{z-1}\quad \mathbf{P}^+-\text{a.s.} \]
and thus
\[ \lim_{n\rightarrow\infty} \mathbf{E}\big[e^{S_n} \mathbb{P}_z(Z_n=k\mid\Pi) \mid L_n\geq 0\big] =z\  \mathbf{E}^+\big[\mathbb{P}(Z_\infty>0\mid \Pi)^2
\mathbb{P}(Z_\infty=0\mid \Pi)^{z-1}\big]\ .\]
Then starting from $z+1$-many individuals, $Z_n$ is the sum of $z+1$-many independent and identically distributed random variables. Thus a.s.
\begin{align*}
  e^{S_n}&\mathbb{P}_{z+1}(Z_n=k\mid \Pi) =e^{S_n} \sum_{j=0}^{k} \mathbb{P}(Z_n=j\mid\Pi) \mathbb{P}_{z}(Z_n=k-j\mid \Pi)\\
&= \mathbb{P}(Z_n=0\mid\Pi)  e^{S_n}\mathbb{P}_{z}(Z_n=k\mid \Pi)+\sum_{j=1}^{k-1} \mathbb{P}(Z_n=j\mid\Pi) e^{S_n}\mathbb{P}_{z}(Z_n=k-j\mid \Pi)\\
&+ e^{S_n}\mathbb{P}(Z_n=k\mid\Pi)  \mathbb{P}_{z}(Z_n=0\mid \Pi)\ .
\end{align*}
For the first summand, by assumption of the induction 
\[e^{S_n}\mathbb{P}_{z}(Z_n=k\mid \Pi)\stackrel{n\rightarrow\infty}{\longrightarrow} z\ \mathbb{P}(Z_\infty>0\mid \Pi)^2\mathbb{P}(Z_\infty=0)^{z-1}\]
$\mathbf{P}^+$-a.s. for every $k>0$. Under $\mathbf{P}^+$, $ \mathbb{P}(Z_n=0\mid\Pi)\rightarrow\mathbb{P}(Z_\infty=0\mid\Pi)$. Thus as $n\rightarrow\infty$, $\mathbf{P}^+$ a.s.  
\[\mathbb{P}(Z_n=0\mid\Pi)  e^{S_n}\mathbb{P}_{z}(Z_n=k\mid \Pi)\rightarrow z\ \mathbb{P}(Z_\infty>0\mid \Pi)^2\mathbb{P}(Z_\infty=0\mid \Pi)^{z}\ . \]
As to the second part, again $e^{S_n}\mathbb{P}_{z}(Z_n=k-j\mid \Pi)$ converges $\mathbf{P}^+$ a.s. for every $k-j>0$. Note that $ e^{S_n}\mathbb{P}(Z_n=j\mid\Pi)$ 
converges $\mathbf{P}^+$-a.s. and thus, as $S_n\rightarrow\infty$ $\mathbf{P}^+$-a.s., we have $\mathbb{P}(Z_n=j\mid\Pi)\rightarrow0$ $\mathbf{P}^+$ a.s. Consequently, 
as there are only finitely many summands, 
\[\sum_{j=1}^{k-1} \mathbb{P}(Z_n=j\mid\Pi) e^{S_n}\mathbb{P}_{z}(Z_n=k-j\mid \Pi)\stackrel{n\rightarrow\infty}{\longrightarrow} 0\quad \mathbf{P}^+-\text{a.s.}\]
For the last summand, note that
\[ \mathbb{P}_{z}(Z_n=0\mid \Pi)= \mathbb{P}(Z_n=0\mid \Pi)^z\stackrel{n\rightarrow\infty}{\longrightarrow} \mathbb{P}(Z_\infty=0\mid \Pi)^z\]
and consequently, as $n\rightarrow\infty$, $\mathbf{P}^+$-a.s.
\[e^{S_n}\mathbb{P}(Z_n=k\mid\Pi)  \mathbb{P}_{z}(Z_n=0\mid \Pi)\rightarrow \mathbb{P}(Z_\infty>0\mid \Pi)^2 \mathbb{P}(Z_n=0\mid \Pi)^z\ .\]
Putting this together and applying Lemma \ref{lem25}, we get that
\begin{align*}
  \lim_{n\rightarrow\infty} &\mathbf{E}\big[e^{S_n} \mathbb{P}_{z+1}(Z_n=k\mid\Pi) \mid L_n\geq 0\big]\\
&=z\ \mathbf{E}^+\big[\mathbb{P}(Z_\infty>0\mid \Pi)^2\mathbb{P}(Z_\infty=0\mid \Pi)^{z}\big]+\mathbf{E}^+\big[\mathbb{P}(Z_\infty>0\mid \Pi)^2\mathbb{P}(Z_\infty=0\mid\Pi)^{z}\big]  \\
&= (z+1)\ \mathbf{E}^+\big[\mathbb{P}(Z_\infty>0\mid \Pi)^2\mathbb{P}(Z_\infty=0\mid\Pi)^{z}\big] \ .
\end{align*}
This ends up the induction
\end{proof}
$\qquad$\textsl{Proof of Theorem \ref{theo23}.}
Fix $c\in\mathbb{N}$. Then for $1\leq k\leq c$
\begin{align*}
 \mathbb{P}(Z_n=k\mid 1\leq Z_n\leq c)= \frac{ \mathbb{P}(Z_n=k)}{\sum_{j=1}^c  \mathbb{P}(Z_n=j)} \ .
ends\end{align*}
Next, using the change of measure and the decomposition according to the global minimum, for every $0\leq m\leq n$,
\begin{align*}
 \mathbb{P}(Z_n=k) &= \gamma^n\sum_{i=0}^m \mathbf{E}\big[e^{S_n}\mathbb{P}(Z_n=k\mid\Pi) ; \tau_i=i, L_{i,n}\geq 0\big]\\
&+\mathbb{P}(Z_n=k,\tau_n>m) \ .
\end{align*}
Let $\varepsilon>0$. By Lemma \ref{lemhelp} and Theorem \ref{theo1}, the second term is can be bounded by $\varepsilon\mathbb{P}(Z_n=1)$ for $m$ 
large enough and as $n\rightarrow\infty$. Examining the first term, we get that
\begin{align*}
 \gamma^n\sum_{i=0}^m& \mathbf{E}\big[e^{S_n}\mathbb{P}(Z_n=k\mid\Pi) ; \tau_i=i, L_{i,n}\geq 0\big] = \gamma^n \sum_{i=0}^m 
\mathbf{E}\big[e^{S_i}e^{S_n-S_i}\mathbb{P}(Z_n=k\mid\Pi) ; \tau_i=i, L_{i,n}\geq 0\big]\\
&=\gamma^n \sum_{i=0}^m 
\mathbf{E}\big[e^{S_i} \psi(Z_i,n-i); \tau_i=i\big] \mathbf{P}(L_{n-i}\geq 0) \ ,
\end{align*}
where
\begin{align*}
 \psi(z,n)&=\mathbf{E}\big[e^{S_n} \mathbb{P}_z(Z_n=k\mid \Pi) \mid L_{n}\geq 0\big] \ .
\end{align*}
The expectation in $\psi$ is taken with respect to the shifted environment $\theta_i\circ\Pi$. Note that as $n\rightarrow\infty$, using Lemma \ref{le_35},
\begin{align*}
 \psi(z,n)\rightarrow z\ \mathbf{E}^+\big[\mathbb{P}(Z_\infty>0\mid \Pi)^2 \mathbb{P}(Z_\infty=0\mid \Pi)^{z-1}\big] \ .
\end{align*}
This term does not depend on $k$ as $n\rightarrow\infty$. Thus for every $k\in\mathbb{N}$, 
\begin{align*}
 \lim_{n\rightarrow\infty}\mathbb{P}(Z_n=1)/\lim_{n\rightarrow\infty}\mathbb{P}(Z_n=k)= 1\ .
\end{align*}
Theorem \ref{theo23} immediately results from this. \qed\medskip\\
Our next result concerns the time of the prospective global minima. For convenience, 
we shorten $\tau_{\lfloor nt\rfloor}$, $Z_{\lfloor nt\rfloor}$, $S_{\lfloor nt\rfloor}$ to  $\tau_{nt}$, $Z_{nt}$, $S_{nt}$, i.e. we drop $\lfloor \cdot \rfloor$ in the indices.
\begin{lemma}\label{le_39}
 For every $\varepsilon>0$ and $t\in(0,1)$, there is an $m\in\mathbb{N}$ such that 
\[\mathbf{P}(\tau_{nt,n}>\lfloor nt\rfloor +m \mid L_n\geq 0)\leq\varepsilon \ .\]
\end{lemma}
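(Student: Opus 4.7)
The plan is to adapt the strategy of Lemma \ref{lemhelp}, now with the decomposition anchored at generation $\lfloor nt\rfloor$ rather than at $0$. I would first write
\[\mathbf{P}(\tau_{nt,n} > \lfloor nt\rfloor + m,\, L_n \geq 0) = \sum_{k = \lfloor nt\rfloor + m + 1}^{n} \mathbf{P}(\tau_{nt,n} = k,\, L_n \geq 0).\]
On $\{\tau_{nt,n} = k\}\cap\{L_n\geq 0\}$ the walk splits into three independent pieces: an initial segment on $[0,\lfloor nt\rfloor]$ satisfying $L_{\lfloor nt\rfloor}\geq 0$; a middle segment on $[\lfloor nt\rfloor,k]$ on which $S_k\geq 0$ is a strict minimum attained only at the right endpoint (i.e.\ $S_j > S_k$ for $\lfloor nt\rfloor\leq j<k$); and a terminal segment on $[k,n]$ satisfying $L_{k,n}\geq 0$. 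Applying the Markov property at $\lfloor nt\rfloor$ and at $k$, together with the time-reversal identity that underlies the definition of the renewal function $u$ recalled in Section 4, one obtains
\[\mathbf{P}(\tau_{nt,n} = k,\, L_n \geq 0) = \mathbf{E}\big[\mathbf{1}_{L_{\lfloor nt\rfloor}\geq 0}\,p(S_{\lfloor nt\rfloor},\,k-\lfloor nt\rfloor)\big]\,\mathbf{P}(L_{n-k}\geq 0),\]
where $p(x,N):=\mathbf{P}(-S_N\leq x,\,M_N<0)$ and $u(x)=1+\sum_{N\geq 1}p(x,N)$.

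Setting $N_1=k-\lfloor nt\rfloor$, the proof then reduces to showing that for every $\varepsilon>0$, if $m$ is large enough,
\[\sum_{N_1=m+1}^{n-\lfloor nt\rfloor}\mathbf{E}\big[\mathbf{1}_{L_{\lfloor nt\rfloor}\geq 0}\,p(S_{\lfloor nt\rfloor},N_1)\big]\,\mathbf{P}(L_{n-\lfloor nt\rfloor-N_1}\geq 0) \leq \varepsilon\,\mathbf{P}(L_n\geq 0).\]
This is a convolution sum of exactly the type controlled by \cite{agkv05}[Lemma 2.2] in the proof of Lemma \ref{lemhelp}, here applied with the weight $u$ in place of $e^{-\cdot}$. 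I would split the range at $(1-t-\delta)n$ for a small $\delta>0$. On $N_1\leq (1-t-\delta)n$, the slow variation of $l$ yields $\mathbf{P}(L_{n-\lfloor nt\rfloor-N_1}\geq 0)\leq C_\delta\,\mathbf{P}(L_n\geq 0)$, and the tail $\sum_{N_1>m}p(x,N_1)$ is then handled via the summability $\sum_N p(x,N)=u(x)$ combined with the martingale identity $\mathbf{E}[u(S_{\lfloor nt\rfloor});L_{\lfloor nt\rfloor}\geq 0]=u(0)=1$. The complementary range contributes $o(\mathbf{P}(L_n\geq 0))$ via the polynomial decay of $\mathbf{P}(M_N<0)$ and the crude bound $p(x,N)\leq \mathbf{P}(M_N<0)$.

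The principal difficulty is making the bulk tail estimate uniform in $n$: under $\{L_{\lfloor nt\rfloor}\geq 0\}$, $S_{\lfloor nt\rfloor}$ typically diverges like $n^{1/\alpha}/l(n)$, so the pointwise statement $\sum_{N_1>m}p(x,N_1)\to 0$ (for fixed $x$) is not enough on its own. The saving lies in the joint scaling of $p(x,N)=\mathbf{P}(M_N<0)\cdot\mathbf{P}(-S_N\leq x\mid M_N<0)$, where conditionally on $\{M_N<0\}$ the random variable $-S_N$ concentrates on scale $N^{1/\alpha}$; integrating against the weak limit of $S_{\lfloor nt\rfloor}/(nt)^{1/\alpha}$ under $\{L_{\lfloor nt\rfloor}\geq 0\}$ (the meander value at time $t$) yields the required smallness once $m$ is taken large. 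Combining these estimates and dividing by $\mathbf{P}(L_n\geq 0)$ closes the argument.
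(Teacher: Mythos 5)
Your reduction is, up to notation, the same as the paper's: you decompose at $\lfloor nt\rfloor$ and at the location of the post-$\lfloor nt\rfloor$ minimum, identify the middle block via time reversal with $p(x,N)=\mathbf{P}(-S_N\leq x,\,M_N<0)$ (the paper writes this as $\mathbf{P}(\tau_N=N,\,S_N\geq -x)$), and arrive at the convolution sum $\sum_{N>m}\mathbf{E}\big[1_{L_{\lfloor nt\rfloor}\geq 0}\,p(S_{\lfloor nt\rfloor},N)\big]\mathbf{P}(L_{n-\lfloor nt\rfloor-N}\geq 0)$. The paper then applies \cite{agkv05}[Lemma 2.2] with the weight $1_{[0,x]}$ for each \emph{fixed} $x$ and integrates the resulting bound $\varepsilon\,\mathbf{P}(L_{n-\lfloor nt\rfloor}\geq 0)$ over the law of $S_{\lfloor nt\rfloor}$ on $\{L_{\lfloor nt\rfloor}\geq 0\}$, tacitly treating the threshold $m$ of that lemma as uniform in $x$. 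You correctly refuse to do this and flag the non-uniformity as the principal difficulty.

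The problem is that this difficulty cannot be overcome, because the inequality you are trying to prove fails. For $x\asymp n^{1/\alpha}$, which is where essentially all of the normalized mass of $S_{\lfloor nt\rfloor}$ sits under $\{L_{\lfloor nt\rfloor}\geq 0\}$, the constraint $-S_N\leq x$ is inactive for every $N=o(n)$, so $p(x,N)\approx\mathbf{P}(M_N<0)$ and taking $m$ large produces no smallness relative to $\mathbf{P}(L_{n-\lfloor nt\rfloor}\geq 0)$; your appeal to the concentration of $-S_N$ on scale $N^{1/\alpha}$ points in the wrong direction, since $N^{1/\alpha}\ll x$ there. One can see directly that the conditional probability in the Lemma in fact tends to $1$ for every fixed $m$: the event $\{L_n\geq 0,\ \tau_{nt,n}=\lfloor nt\rfloor+N\}$ is contained in $\{L_{\lfloor nt\rfloor}\geq 0\}\cap\{L_{\lfloor nt\rfloor+N,n}\geq 0\}$, and these two events are independent, whence $\mathbf{P}(\tau_{nt,n}\leq\lfloor nt\rfloor+m,\ L_n\geq 0)\leq (m+1)\,\mathbf{P}(L_{\lfloor nt\rfloor}\geq 0)\,\mathbf{P}(L_{n-\lfloor nt\rfloor-m}\geq 0)\leq C_{t,m}\,\mathbf{P}(L_n\geq 0)^2=o\big(\mathbf{P}(L_n\geq 0)\big)$ by (\ref{eq_min}) and regular variation. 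This is consistent with the scaling picture: under $\mathbf{P}(\cdot\mid L_n\geq 0)$ the rescaled walk converges to the meander, whose minimum over $[t,1]$ is attained at a macroscopic distance after $t$; the tightness of $\tau_{nt,n}-\lfloor nt\rfloor$ holds under $\mathbf{P}^+$, not under $\mathbf{P}(\cdot\mid L_n\geq 0)$. So the gap in your proposal is real and unfixable as stated, and the same objection applies to the paper's own proof of this lemma (and propagates to the treatment of the term $s_2$ in the proof of Theorem \ref{theo3}); any repair has to replace the conditioning $\{L_n\geq 0\}$ by one under which the post-$\lfloor nt\rfloor$ minimum really is attained within $O(1)$ steps.
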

\begin{proof}
The main idea is to apply \cite{agkv05}[Lemma 2.2]. First note that decomposing at time $\lfloor nt\rfloor$ and by independence,
\begin{align*}
\mathbf{P}(\tau_{nt,n}>\lfloor nt\rfloor +m, L_n\geq 0) &= \int_{0}^\infty \mathbf{P}(\tau_{n-nt}>m, L_{n-nt}\geq -x)\mathbf{P}(S_{nt}\in dx, L_{nt}\geq 0)\ dx\ . 
\end{align*} 
Next, we can rewrite 
\begin{align*}
\mathbf{P}(\tau_{n-nt}>m, L_{n-nt}\geq -x)&=\sum_{k=m+1}^{n-\lfloor nt\rfloor} \mathbf{P}(\tau_k=k,S_k\geq -x) \mathbf{P}(L_{n-nt-k}\geq 0) \\
&= \sum_{k=m+1}^{n-\lfloor nt\rfloor} \mathbf{E}\big[u(-S_k);\tau_k=k\big] \mathbf{P}(L_{n-nt-k}\geq 0) ,
\end{align*}
where $u(y):=1_{y\leq x}$. Obviously, $u$ is nonnegative, nonincreasing in $y$ and $\int_{0}^\infty u(y)dy=x<\infty$. Thus all conditions of \cite{agkv05}[Lemma 2.2] are met. Applying this lemma yields for every $x$, every $\varepsilon>0$ and if $m$ is large enough
\begin{align*}
 \sum_{k=m+1}^{n-\lfloor nt\rfloor} \mathbf{E}\big[u(-S_k);\tau_k=k\big] \mathbf{P}(L_{n-nt-k}\geq 0) \leq \varepsilon \mathbf{P}(L_{n-nt}\geq 0) \ .
\end{align*}
Thus we get that
\begin{align*}
\mathbf{P}(\tau_{nt,n}>\lfloor nt\rfloor +m, L_n\geq 0)&\leq \varepsilon \int_0^\infty \mathbf{P}(S_{nt}\in dx, L_{nt}\geq 0) \mathbf{P}(L_{n-nt}\geq 0) \ dx\\
&= \varepsilon  \mathbf{P}(L_{nt}\geq 0)  \mathbf{P}(L_{n-nt}\geq 0) \leq \varepsilon \mathbf{P}(L_n\geq 0) \ . 
\end{align*}
For the last step, note that $\{L_{nt}\geq 0\}\cap\{L_{nt,n}\geq 0\}$ implies $\{L_n\geq 0\}$. Thus, as the two terms in the first event are independent, we get that
\[\mathbf{P}(L_{nt}\geq 0)  \mathbf{P}(L_{n-nt}\geq 0) \leq \mathbf{P}(L_n\geq 0)\ .\] 
\end{proof}
$\qquad$\textsl{Proof of Theorem \ref{theo3}.} Let $0\leq m\leq n$ and $t\in (0,1)$. Decomposing according to the global minimum yields
\begin{align}
 \mathbb{P}\big(Z_{\tau_{nt,n}}=z,Z_n=1\big) &= \sum_{k=0}^m \mathbb{P}\big(Z_{\tau_{nt,n}}=z,Z_n=1, \tau_n=k\big) \nonumber\\
&\qquad+
 \mathbb{P}\big(Z_{\tau_{nt,n}}=z,Z_n=1, \tau_n>m\big)\ ,\label{eq1425}
\end{align}
By Lemma \ref{lemhelp} and Theorem \ref{theo1}, for every $\varepsilon>0$ and $m$ large enough,
\begin{align*}
 \limsup_{n\rightarrow\infty} \mathbb{P}(Z_n=1)^{-1} \mathbb{P}\big(Z_{\tau_{nt,n}}=z,Z_n=1, \tau_n>m\big)\leq \varepsilon \ .
\end{align*}
Thus we only have to consider the first sum on the right-hand side in (\ref{eq1425}). Let $m<\ell< n-\lfloor nt\rfloor$ be specified later. Then we get that
\begin{align}
 &\sum_{k=0}^m \mathbb{P}\big(Z_{\tau_{nt,n}}=z,Z_n=1, \tau_n=k\big)\nonumber \\
&= \gamma^n \sum_{k=0}^m \sum_{j=0}^{n-\lfloor nt\rfloor}\mathbf{E}\big[e^{S_n}\mathbb{P}(Z_{nt+j}=z,Z_n=1\mid \Pi);  \tau_k=k, L_{k,n}\geq 0,\tau_{nt,n}=\lfloor nt\rfloor+j\big] \nonumber\\
&= \gamma^n \sum_{k=0}^m \sum_{j=0}^{\ell}\mathbf{E}\big[e^{S_{nt+j}} e^{S_n-S_{nt+j}}\mathbb{P}(Z_{nt+j}=z,Z_n=1\mid \Pi);  \tau_k=k, L_{k,n}\geq 0,\tau_{nt,n}=\lfloor nt\rfloor+j\big] \nonumber\\
&+\gamma^n \sum_{k=0}^m \sum_{j=\ell+1}^{n-\lfloor nt\rfloor}\mathbf{E}\big[e^{S_n}\mathbb{P}(Z_{nt+j}=z,Z_n=1\mid \Pi);  \tau_k=k, L_{k,n}\geq 0,\tau_{nt,n}=\lfloor nt\rfloor+j\big]\nonumber\\
&=:s_1+s_2\ . \label{eqbuda1}
\end{align}
In view of $\mathbb{P}(Z_{nt+j}=z,Z_n=1\mid \Pi)\leq \mathbb{P}(Z_n=1\mid \Pi)\leq e^{-S_n}$ a.s. (see this special property of generalized geometric offspring in (\ref{geo2})), the second summand can be bounded by
\begin{align*}
 s_2&\leq \gamma^n \sum_{k=0}^m \sum_{j=\ell+1}^{n-\lfloor nt\rfloor}\mathbf{P}\big(\tau_k=k, L_{k,n}\geq 0,\tau_{nt,n}=\lfloor nt\rfloor+j\big)\\
&=\gamma^n \sum_{k=0}^m \mathbf{P}\big(\tau_k=k)\mathbf{P}(L_{n-k}\geq 0) \mathbf{P}(\tau_{nt-k,n-k}>\lfloor nt\rfloor+\ell-k\mid L_{n-k}\geq 0\big)\ .
\end{align*}
Using Lemma \ref{le_39} yields for every fixed $m\in\mathbb{N}$ and $\varepsilon>0$, if $\ell=\ell(m,\varepsilon)$ is large enough,
\begin{align*}
s_2 &\leq \frac{\varepsilon}{\sum_{k=0}^m \mathbf{P}\big(\tau_k=k)} \gamma^n  \sum_{k=0}^m \mathbf{P}\big(\tau_k=k)\mathbf{P}(L_{n-k}\geq 0)\\
&\leq \frac{\varepsilon}{\sum_{k=0}^m \mathbf{P}\big(\tau_k=k)} \gamma^n \mathbf{P}(L_{n-m}\geq 0) \sum_{k=0}^m \mathbf{P}(\tau_{k}=k)\\
&\leq \varepsilon \ \gamma^n \mathbf{P}(L_{n-m}\geq 0)\ .
\end{align*}
In the second step, we have used the fact that $\mathbf{P}(L_n\geq 0)$ is decreasing in $n$. \\
As $\mathbf{P}(L_{n-m}\geq 0)\sim \mathbf{P}(L_n\geq 0)$ as $n\rightarrow\infty$ and using Theorem \ref{theo1}, we get that
\[s_2\leq \varepsilon \mathbb{P}(Z_n=1)\ .\]
 Thus $s_2$ may be neglected as $n\rightarrow\infty$ and then $m\rightarrow\infty$.\\
Let us turn to the term $s_1$. First note that the event $\{\tau_{nt,n}=j\}$ can be written as
\[\{\tau_{nt,n}=j\}= \{ \tau_{nt,j}=j \}\cap\{L_{j,n}\geq 0\}\ .\]
Moreover,
\[\{\tau_{nt,n}=j, L_n\geq 0\}= \{ \tau_{nt,j}=j, L_j\geq 0 \}\cap\{L_{j,n}\geq 0\}\ ,\]
where both events are independent. Conditioning on $\mathcal{F}_{nt+j}$, we get that
\begin{align}
&s_1= \gamma^n \sum_{k=0}^m \sum_{j=0}^{\ell}\mathbf{E}\big[e^{S_{nt+j}}\mathbb{P}(Z_{nt+j}=z\mid \Pi);  \tau_k=k, L_{k,nt+j}\geq 0,\tau_{nt,nt+j}=\lfloor nt\rfloor+j\big]\nonumber\\
& \qquad  \qquad \cdot\mathbf{E}\big[e^{S_{n-nt-j}} \mathbb{P}_z(Z_{n-nt-j}=1\mid \Pi);L_{n-nt-j}\geq 0 \big] \nonumber\\
&= \gamma^n \sum_{k=0}^m \sum_{j=0}^{\ell}\mathbf{E}\big[e^{S_{nt+j}}\mathbb{P}(Z_{nt+j}=z\mid \Pi);  \tau_k=k, L_{k,nt+j}\geq 0,\tau_{nt,nt+j}=\lfloor nt\rfloor+j\big]\nonumber\\
& \qquad  \qquad \cdot\mathbf{E}\big[e^{S_{n-nt-j}} \mathbb{P}_z(Z_{n-nt-j}=1\mid \Pi)|L_{n-nt-j}\geq 0 \big] \mathbf{P}(L_{n-nt-j}\geq 0 )\ . \label{eq955}
\end{align}
Next, we use the explicit formula for the probability in (\ref{geo2}) and get that a.s.
\begin{align*}
 e^{S_{nt+j}}\mathbb{P}(Z_{nt+j}=z\mid \Pi)= \mathbb{P}(Z_{nt+j}>0\mid \Pi)^2 H_{nt+j}^{z-1}\ . 
\end{align*}
Using Lemma \ref{le_35} for $k=1$, as $n\rightarrow\infty$
\begin{align*}
 \mathbf{E}\big[e^{S_{n-nt-j}} &\mathbb{P}_z(Z_{n-nt-j}=1\mid \Pi)\mid L_{n-nt-j}\geq 0 \big] \\
& =z\ \mathbf{E}^+\big[\mathbb{P}(Z_\infty>0\mid\Pi)^2 \mathbb{P}(Z_\infty=0\mid\Pi)^{z-1} \big]+o(1)\ .
\end{align*}
Inserting this into (\ref{eq955}) yields 
\begin{align*}
 s_1&= \gamma^n \sum_{k=0}^m \sum_{j=0}^{\ell}\mathbf{E}\big[\mathbb{P}(Z_{nt+j}>0\mid \Pi)^2 H_{nt+j}^{z-1};  \tau_k=k, L_{k,nt+j}\geq 0,\tau_{nt,nt+j}=\lfloor nt\rfloor+j\big]\nonumber \\
&\qquad  \qquad \cdot\Big(z\ \mathbf{E}^+\big[\mathbb{P}(Z_\infty>0\mid\Pi)^2 \mathbb{P}(Z_\infty=0\mid\Pi)^{z-1} \big]+o(1)\Big)\mathbf{P}(L_{n-nt-j}\geq 0 )\nonumber \\
&= \gamma^n \sum_{k=0}^m \sum_{j=0}^{\ell}\mathbf{E}\big[\mathbb{P}(Z_{nt+j}>0\mid \Pi)^2 H_{nt+j}^{z-1};  \tau_k=k, L_{k,n}\geq 0,\tau_{nt,n}=\lfloor nt\rfloor+j\big]\nonumber \\
&\qquad  \qquad \cdot\Big(z\ \mathbf{E}^+\big[\mathbb{P}(Z_\infty>0\mid\Pi)^2 \mathbb{P}(Z_\infty=0\mid\Pi)^{z-1} \big]+o(1)\Big)\nonumber \\
&= \gamma^n \sum_{k=0}^m \mathbf{E}\big[\mathbb{P}(Z_{\tau_{nt,n}}>0\mid \Pi)^2 H_{\tau_{nt,n}}^{z-1};  \tau_k=k, L_{k,n}\geq 0,\tau_{nt,n}\leq \lfloor nt\rfloor +\ell \big]\nonumber \\
&\qquad  \qquad \cdot\Big(z\ \mathbf{E}^+\big[\mathbb{P}(Z_\infty>0\mid\Pi)^2 \mathbb{P}(Z_\infty=0\mid\Pi)^{z-1} \big]+o(1)\Big)\ .\nonumber 
\end{align*}
Note that always $\tau_{nt,n} \geq \tau_n$. In the first factor, we can condition on $\{L_{k,n}\geq 0\}$ and get that
\begin{align*}
 &\mathbf{E}\big[\mathbb{P}(Z_{\tau_{nt,n}}>0\mid \Pi)^2 H_{\tau_{nt,n}}^{z-1};  \tau_k=k, L_{k,n}\geq 0,\tau_{nt,n}\leq \lfloor nt\rfloor+\ell \big] \\
&=\mathbf{E}\Big[\mathbf{E}\big[\mathbb{P}(Z_{\tau_{nt,n}}>0\mid \Pi)^2 H_{\tau_{nt,n}}^{z-1},\tau_{nt,n}\leq \lfloor nt\rfloor+\ell \mid L_{k,n}\geq 0, \mathcal{F}_k\big];  
\tau_k=k\Big]\mathbf{P}(L_{n-k}\geq 0)\\
&\sim \mathbf{P}(L_n\geq 0)\mathbf{E}\Big[\mathbf{E}\big[\mathbb{P}(Z_{\tau_{nt,n}}>0\mid \Pi)^2 H_{\tau_{nt,n}}^{z-1};\tau_{nt,n}\leq \lfloor nt\rfloor+\ell \mid L_{k,n}\geq 0,\mathcal{F}_k\big];  \tau_k=k\Big] \ .
\end{align*}
By \cite{agkv05}[Proof of Lemma 2.7], $\tau_{nt,n}\rightarrow \infty$ $\mathbf{P}^+$-a.s. as $n\rightarrow\infty$.
Thus, $\mathbb{P}(Z_{\tau_{nt,n}}>0\mid \Pi)$ converges a.s. with respect to $\mathbf{P}^+$.  Moreover, 
as under $\mathbf{P}^+$, $S_n\rightarrow\infty$ and $S_{\tau_{nt,n}}\rightarrow\infty$ as $n\rightarrow \infty$, we get $H_{\tau_{nt,n}}^{z-1}\rightarrow 1$ $\mathbf{P}^+$-a.s.
Using this, Lemma \ref{lem25} and Lemma \ref{le_39}, we get a.s. as $n\rightarrow\infty$
\[ \mathbf{E}\big[\mathbb{P}(Z_{\tau_{nt,n}}>0\mid \Pi)^2 H_{\tau_{nt,n}}^{z-1};\tau_{nt,n}\leq \lfloor nt\rfloor+\ell \mid L_{k,n}\geq 0,\mathcal{F}_k\big] \rightarrow 
\mathbf{E}^+ \big[(1-f_{0,k}\big(P^k_\infty)\big)^2\big]\ , \]
where $P^k_\infty$ has been defined in (\ref{eq_pk}) and the expectation only acts on the shifted environment $\theta_k\circ\Pi$.
We can now formulate the limit. As $s_2$ and thus the corresponding probability on the event $\{\tau_{nt,n}> nt+m \}$ 
can be neglected as $n\rightarrow\infty$ and if $m\rightarrow\infty$, we get that
\begin{align*}
 \mathbb{P}&(Z_{\tau_{nt,n}}=z, Z_n=1) \\
&\sim z\ \mathbf{E}^+\big[\mathbb{P}(Z_\infty>0\mid\Pi)^2 \mathbb{P}(Z_\infty=0\mid\Pi)^{z-1} \big]\gamma^n \mathbf{P}(L_n\geq 0) \sum_{k=0}^\infty \mathbf{E}\big[\mathbf{E}^+\big[(1-f_{0,k}\big(P^k_\infty)\big)^2\big]; \tau_k=k\big]\nonumber\ .
\end{align*}
Together with Theorem \ref{theo1} and the formula for $\theta$ in (\ref{reptheta2}), we get that as $n\rightarrow\infty$ 
\begin{align*}
 \lim_{n\rightarrow\infty}& \mathbb{P}\big(Z_{\tau_{nt,n}}=z\mid Z_n=1\big)=z\ \mathbf{E}^+\big[\mathbb{P}(Z_\infty>0\mid\Pi)^2 \mathbb{P}(Z_\infty=0\mid\Pi)^{z-1} \big]  \ .
\end{align*}
As proved in Theorem \ref{thstrong2}, this is indeed a probability distribution on $\mathbb{N}$.
\qed \medskip\\
\textbf{Acknowledgment.}  The author thanks Vincent Bansaye from Ecole Polytechnique, Palaiseau for fruitful discussions on this topic. The author also would like to thank the anonymous referees for their useful comments and improvements.


\begin{thebibliography}{99}

\bibitem{af_80} 
\textsc{Afanasyev, V.\,I.}  (1980). 
 Limit theorems for a conditional random walk and some applications. \textsl{Diss. Cand. Sci.}, Moscow, MSU. 

\bibitem{af_01}
\textsc{Afanasyev, V.\,I.} (2001). Limit theorems for an intermediately subcritical and a strongly subcritical branching process in a random environment. \textsl{Discrete Math. Appl.} \textbf{11}, 105-131.
 
\bibitem{abkv11}
\textsc{Afanasyev, V.\,I.,  B\"oinghoff, Ch., Kersting, G., 
and Vatutin, V.\,A.} (2010). 
Limit theorems for weakly subcritical branching processes in random environment. \textsl{J. Theor. Probab.} 

\bibitem{abkv12}
\textsc{Afanasyev, V.\,I.,  B\"oinghoff, Ch., Kersting, G., and Vatutin, V.\,A.} (2012). 
Limit theorems for intermediately subcritical branching processes in random environment. \textsl{accepted for publication in  Ann. I.H. Poincar\'{e} (B). DOI: 10.1214/13-AIHP538.}   

\bibitem{agkv05}
\textsc{Afanasyev, V.\,I.,  Geiger, J., Kersting, G., 
and Vatutin, V.\,A.} (2005). 
Criticality for branching processes in random environment.
\textsl{Ann. Probab.} \textbf{33}, 645--673.

 \bibitem{agkv2}
\textsc{Afanasyev, V.\,I.,  Geiger, J., Kersting, G.,
and Vatutin, V.\,A.} (2005). 
Functional limit theorems for strongly subcritical branching processes in random environment. \textsl{Stoch. Proc. Appl. }\textbf{115}, 1658--1676. 

 \bibitem{ag} \textsc{Agresti, A.}     
 (1975). On the extinction times of varying and random environment     
 branching processes. \textsl{J. Appl. Probab. }\textbf{12}, 39--46.
 
 \bibitem{at} \textsc{Athreya, K.B. and Karlin, S.} (1971). 
On branching processes with random environments: I, II. 
\textsl{Ann. Math. Stat.      }\textbf{42}, 1499--1520, 1843--1858.  

\bibitem{BB12} \textsc{Bansaye, V. and B\"oinghoff, C.} (2012).  Small values for supercritical branching processes in random environment. \textsl{accepted for publication in Ann. I.H. Poincar\'{e} (B).}.

\bibitem{BB13} \textsc{Bansaye, V. and B\"oinghoff, C.} (2013).  Lower large deviations for supercritical branching processes in random environment. 
\textsl{Proceedings of the Steklov Institute of Mathematics} \textbf{282}, 15--34.

\bibitem{be} 
\textsc{Bertoin, J.} (1996). 
 \textsl{L\'evy processes}. Cambridge University Press, Cambridge.

\bibitem{bedo} \textsc{Bertoin, J. and Doney, R.A.} (1994).  On conditioning a random walk to stay non-negative. \textsl{Ann. Probab. }\textbf{22}, 2152 -- 2167.

\bibitem{bgk} 
\textsc{Birkner, M., Geiger, J.,  and Kersting, G.} (2005). 
Branching processes in random environment - a view on critical and subcritical cases. 
Proceedings of the DFG-Schwerpunktprogramm \textsl{Interacting Stochastic 
Systems of High Complexity}, Springer, Berlin, 265--291.

  
\bibitem{de} 
\textsc{Dekking, F.M.} (1988).  On the survival probability of a 
branching process in a finite state i.i.d. environment. 
\textsl{Stoch. Proc. Appl. }~\textbf{27}, 151--157. 

\bibitem{BK13} \textsc{B\"oinghoff, C. and Kersting, G.} (2013).  Simulations and a conditional limit theorem for intermediately subcritical 
branching processes in random environment. 
\textsl{Proceedings of the Steklov Institute of Mathematics} \textbf{282}, 45--61.


 \bibitem{do} 
 \textsc{Doney, R.A.} (1985). Conditional limit theorems  for asymptotically stable random walks. 
\textsl{Z.      Wahrscheinlichkeitsth. verw. Geb. }\textbf{70}, 351--360. 
 \bibitem{du} 
 \textsc{Durrett, R.} (1978). Condiotend limit theorems for some null recurrent Markov processes. 
\textsl{Ann. Probab.}\textbf{6}, 798--828. 
 

\bibitem{gkv}
\textsc{Geiger, J., Kersting, G., and Vatutin, V.A.} (2003). Limit theorems for subcritical branching processes in random environment.
\textsl{Ann. I.H. Poincar\'{e} (B). }\textbf{39}, 593--620.

\bibitem{Hutz12} 
\textsc{Hutzenthaler, M.} (2011) Supercritical branching diffusions in random environment. \textsl{Electron. Commun. Probab.} \textbf{16}, 781--791. 

\bibitem{kozlov06} 
\textsc{M. V. Kozlov.} (2006) On large deviations of branching processes in a random environment: geometric distribution of descendants.
\newblock {\em Discrete Math. Appl.} {\bf 16} 155--174.

\bibitem{nak13} \textsc{Nakashima, M.} (2013) 
Lower deviations of branching processes in random environment with geometrical offspring distributions. \textsl{Stochastic Process. Appl. } \text{123}, 3560--3587. 

\bibitem{sm} \textsc{Smith, W.L. and Wilkinson, W.E.} (1969). On   
   branching      processes in random environments. \textsl{Ann. Math. Stat.
      }\textbf{40}, 814--827. 
      
\bibitem{ta} \textsc{Tanaka, H.} (1989). Time reversal of random 
     walks in one dimension. \textsl{Tokyo J. Math.}~\textbf{12}, 159--174.        

\bibitem{va_04} \textsc{Vatutin, V. A.} (2004).
     A limit theorem for an intermediate subcritical branching process in a random environment. \textsl{Theory Probab. Appl} \textbf{48}, 481-492.
              
              
\bibitem{dy04} \textsc{Vatutin, V.A. and Dyakonova, E.E.} (2004). 
Galton-Watson branching processes in random
environment. I: limit theorems.   \textsl{Theory Probab. Appl.}, \textbf{48}, 314--336.
\bibitem{dy05} \textsc{Vatutin, V.A. and Dyakonova, E.E.} (2005). 
Galton-Watson branching processes in random
environment. II: finite-dimensional distributions. \textsl{Theory Probab.
Appl.}, \textbf{49}, 275--308.

\bibitem{dy07} \textsc{Vatutin, V.A. and Dyakonova, E.E.} (2007). 
Branching processes in random environment
and the bottlenecks in the evolution of populations. \textsl{Theory Probab.
Appl.}, \textbf{51}, 189--210.


\end{thebibliography}
\end{document}